\documentclass[11pt]{amsart} \usepackage{latexsym, amssymb, stmaryrd}

\newtheorem{thm}{Theorem}[section]
\newtheorem{lemma}[thm]{Lemma}
\newtheorem{prop}[thm]{Proposition}
\newtheorem{cor}[thm]{Corollary}

\theoremstyle{definition}
\newtheorem{df}[thm]{Definition}
\newtheorem{rmk}[thm]{Remark}
\newtheorem{rmks}[thm]{Remarks}
\newtheorem{ex}[thm]{Example}
\newtheorem{fact}[thm]{Fact}
\newtheorem{question}[thm]{Question}

% change default numbering for enumerate environment to be in parentheses

% enumerate environment with roman numbering

% blackboard letters for naturals, rationals, reals etc.

%\newcommand{\N}{\mathbb{N}}

\newcommand{\Z}{\mathbb{Z}}

% various "curly" letters

%\newcommand{\R}{\curly{R}}

\def \r { {\mathbb R} }
\def \<{\langle}
\def \>{\rangle}
\def \n {\mathbb N}

\def \*Z {{{^*}\Z}}
\def \g{\mathfrak{g}}
\def \((  {(\!(}
\def \)) {)\!)}

\def \st {\operatorname {st}}

\def \ns{\operatorname{ns}}
\def \pns{\operatorname{pns}}

\newcommand{\hh}{\hat{\mathfrak{h}}}
\newcommand{\hH}{\hat{H}}
\newcommand{\hexp}{\hat{\exp}}
\def \g{\mathfrak{g}}
\def \h{\mathfrak{h}}
\def \p{\hat{p}}
\def \gamg{\Gamma_{\mathfrak{g}}}
\def \gamh{\Gamma_{\hat{\mathfrak{h}}}}
\def \u{U^{[1]}}
\def \f{f^{[1]}}
\def \uu{U^{[k]}}
\def \ff{f^{[k]}}
\def \uuu{U^{[k+1]}}
\def \fff{f^{[k+1]}}
\def \lin{\operatorname{Lin}}
\def \flin{\operatorname{FLin}}
\def \int{\operatorname{int}}
\numberwithin{equation}{section}
\def \l{\llbracket}
\def \rr{\rrbracket}

% display breaks

\allowdisplaybreaks[2]

\begin{document}

\title{Nonstandard Hulls of Locally Exponential Lie Algebras}
\author{Isaac Goldbring} 
\address {University of Illinois, Department of Mathematics, 1409 W. 
Green street, Urbana, IL 61801}

\email{igoldbr2@math.uiuc.edu}
\urladdr{www.math.uiuc.edu/~igoldbr2}
\thanks{This research was supported by an Arnold O. Beckman Research Award.}

\begin{abstract}
We show how to construct the nonstandard hull of certain infinite-dimensional Lie algebras in order to generalize a theorem of Pestov on the enlargeability of Banach-Lie algebras.  In the process, we consider a nonstandard smoothness condition on functions between locally convex spaces to ensure that the induced function between the nonstandard hulls is smooth.  We also discuss some conditions on a function between locally convex spaces which guarantee that its nonstandard extension maps finite points to finite points.
\end{abstract}

\maketitle

\

\section{Introduction}

\

\noindent In the early 1990s, Pestov \cite{P} gave a nonstandard hull construction for Banach-Lie algebras and groups and used it to prove the following theorem on the enlargeability of Banach-Lie algebras.

\begin{thm}\label{T:pestov}
Let $\g$ be a Banach-Lie algebra.  Suppose that there exists a family $\mathcal{H}$ of closed Lie subalgebras of $\g$ and a neighborhood $V$ of $0$ in $\g$ such that:

\begin{itemize}
\item For each $\h_1,\h_2\in \mathcal{H}$, there is an $\h_3\in \mathcal{H}$ such that $\h_1\cup \h_2\subseteq \h_3$;
\item $\bigcup \mathcal{H}$ is dense in $\g$;
\item every $\h\in \mathcal{H}$ is enlargeable and if $H$ is a corresponding connected, simply connected Lie group, then the restriction $\exp_H|V\cap \h$ is injective.
\end{itemize}
Then $\g$ is enlargeable.
\end{thm}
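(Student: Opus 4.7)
The plan is to use the nonstandard hull construction developed in this paper to realize $\g$ as the tangent algebra of a local subgroup of an honest Banach-Lie group built from a suitably ``large'' internal member of $\mathcal{H}$. Working in a sufficiently saturated nonstandard extension, I would first invoke the directedness of $\mathcal{H}$ together with the density of $\bigcup \mathcal{H}$ in $\g$ to find, by saturation, an internal Lie subalgebra $\h \in {}^*\mathcal{H}$ such that ${}^*\gamma \in \h$ for every $\gamma \in \bigcup \mathcal{H}$. Taking standard parts, the elements of $\bigcup \mathcal{H}$ sit inside the nonstandard hull $\hh$ of $\h$; since $\hh$ is a closed subspace of the nonstandard hull of $\g$ and contains the dense set $\bigcup \mathcal{H}$, it contains all of $\g$. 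This exhibits $\g$ canonically as a closed Banach-Lie subalgebra of $\hh$.

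Next, by transfer, $\h$ is internally enlargeable, so I may choose an internal connected, simply connected Lie group $H$ with internal Lie algebra $\h$ whose internal exponential $\exp_H$ is injective on ${}^*V \cap \h$. Applying the nonstandard hull construction of the paper to $H$ yields a Banach-Lie group $\hH$ with Lie algebra $\hh$ and exponential $\hexp$ induced from $\exp_H$. The uniform, standard-sized neighborhood $V$ from the hypothesis should allow me to transfer the injectivity of $\exp_H$ on ${}^*V \cap \h$ into genuine injectivity of $\hexp$ on a standard neighborhood $\hat{W}$ of $0$ in $\hh$.

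Restricting to the closed subalgebra $\g \subseteq \hh$, the restriction of $\hexp$ to $\hat{W} \cap \g$ is injective, so its image inherits a partial multiplication from $\hH$ and endows $\g$ with the structure of a local Banach-Lie group sitting inside the honest Banach-Lie group $\hH$. Appealing to the classical theorem that every local Banach-Lie group is the restriction of a global connected, simply connected Banach-Lie group with the same Lie algebra, we conclude that $\g$ is enlargeable.

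The principal obstacle I foresee lies in the transfer of injectivity from $\exp_H$ to $\hexp$: if finite $x, y \in \h$ satisfy $\hexp(\st x) = \hexp(\st y)$ in $\hH$, one must deduce that $x - y$ is infinitesimal. This amounts to showing that the equivalence relation defining $\hH$ pulls back through $\exp_H$ to the infinitesimal relation on $\h$ near the origin, which requires both the uniformity of $V$ (so that representatives of $\st x$ and $\st y$ can be moved into ${}^*V \cap \h$ by infinitesimal perturbations) and a careful understanding, provided by the nonstandard hull machinery of this paper, of how the internal group law on $H$ interacts with the standard part map.
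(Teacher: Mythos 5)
Your proposal follows essentially the same route as the paper (which is Pestov's): use directedness and saturation to find $\h\in\mathcal{H}^*$ with $\iota(\g)$ a closed subalgebra of $\hh$; transfer enlargeability to get an internal connected, simply connected $H$ with Lie algebra $\h$ and $\exp_H$ injective on $V^*\cap\h$; form the hull $\hH$ with Lie algebra $\hh$, using the uniform neighborhood $V$ to get injectivity of $\hexp$ on a standard ball $\hat{W}$. All of that matches the paper, and in the Banach setting the BCH series supplies the uniform continuity estimates you correctly identify as the crux of the injectivity/well-definedness step.

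The one step that needs repair is your final appeal to ``the classical theorem that every local Banach-Lie group is the restriction of a global connected, simply connected Banach-Lie group with the same Lie algebra.'' That statement is false: every Banach-Lie algebra carries a local Banach-Lie group structure near $0$ via the BCH series, so this theorem would make every Banach-Lie algebra enlargeable, contradicting the Lazard--Tits examples cited in the paper. What is true, and what your construction actually delivers, is the weaker enlargement criterion (Corollary II.2.2 of Neeb's survey, used in the paper's Lemma 5.15): a local Lie group admitting an \emph{injective} morphism into some group is enlargeable. Since $\hexp|\hat{W}\cap\g$ is exactly such an injective morphism into the honest group $\hH$, the subgroup of $\hH$ it generates carries a Lie group structure with Lie algebra $\g$. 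Equivalently, the paper simply quotes that a closed subalgebra of an enlargeable Banach-Lie algebra is enlargeable. With that citation corrected, your argument goes through.
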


\

\noindent  Recall that a Banach-Lie algebra is \textit{enlargeable} if it is isomorphic to the Lie algebra of a Banach-Lie group.  (Unlike the finite-dimensional situation, there are Banach-Lie algebras which are not enlargeable; see, for example, \cite{LT}.)

\

\noindent  Nowadays, it is recognized that the setting of Banach-Lie groups and algebras is too restrictive when studying infinite-dimensional Lie groups and algebras and the proper model spaces for such groups and algebras are arbitrary \textit{locally convex spaces}.  Let us remind the reader that a locally convex space is a Hausdorff topological vector space $E$ for which there is a basis of neighborhoods of $0$ consisting of convex sets.  Equivalently, a locally convex space is a vector space $E$ equipped with a separating family of seminorms; these seminorms yield a topology on $E$ for which a subbase of open sets around $0$ are sets of the form
$$V(p,\epsilon):=\{x\in E \ | \ p(x)<\epsilon\},$$
as $p$ varies over the family of seminorms and $\epsilon$ ranges over $\r^{>0}$.  For an introduction to infinite-dimensional Lie theory as it is now studied, see the wonderful survey \cite{N} by Karl-Hermann Neeb.  From now on in this paper, when we speak of Lie groups and algebras, we always mean Lie groups and algebras modeled on locally convex spaces.  

\

\noindent Since arbitrary Lie groups and algebras lack much of the structure theory of their finite-dimensional counterparts, one usually adds extra assumptions on the groups/algebras to be able to develop an adequate Lie theory.  A Banach-Lie group is an example of a \textit{locally exponential Lie group}, which is a Lie group possessing a smooth exponential function which provides a diffeomorphism between an open neighborhood of $0$ in its Lie algebra and an open neighborhood of the identity in the Lie group.  There is a corresponding notion of a \textit{locally exponential Lie algebra}, which is a Lie algebra that is a natural candidate to be the Lie algebra of a locally exponential Lie group (see Section 5 for precise definitions of these notions).  It is one of the open problems in the Neeb survey (\cite{N}, Problem VI.6)  to generalize Pestov's theorem to the class of locally exponential Lie algebras.

\

\noindent  Just as in the Banach setting, we can construct the nonstandard hull of an arbitrary internal Lie algebra and prove that it is also a Lie algebra (modeled on the nonstandard hull of the original model space).  Whereas the saturation assumption on the nonstandard extension yields quite easily that the nonstandard hull of an internal Banach-Lie algebra is once again a Banach-Lie algebra, it is not at all immediate that the hull of a locally exponential Lie algebra is also a locally exponential Lie algebra.  This is due to the fact that locally exponential Lie algebras are defined in terms of smooth functions on the underlying locally convex space, and internally smooth functions do not necessarily induce smooth functions on the nonstandard hull.  This is why we have to strengthen the notion of smoothness in our locally exponential Lie algebras to ensure that the nonstandard hull is once again a locally exponential Lie algebra.  

\

\noindent In the Banach setting, if an internal Lie algebra is enlargeable, the theory of the Baker-Campbell-Hausdorff (BCH) series allows one to construct the nonstandard hull of the corresponding Banach-Lie group in a straightforward manner.  We are not as fortunate in our setting, and so our theorem requires an extra (necessary) hypothesis relating the local group operations of the various subalgebras in $\mathcal{H}$ in order to construct the nonstandard hull of an internal Lie group whose Lie algebra is an element of $\mathcal{H}^*$.

\

\noindent We do not assume that the reader is familiar with infinite-dimensional Lie theory and so all relevant notions will be defined. 

\

\noindent We assume that the reader is familiar with elementary nonstandard analysis; otherwise, consult ~\cite{D} or ~\cite{He} for a friendly introduction.  Let us say that all nonstandard arguments take place in a sufficiently saturated nonstandard extension.  

\

\noindent Let us mention a few conventions that we use throughout the paper.  We always suppose $m$ and $n$ range over $\n:=\{0,1,2,\ldots\}$.  For any set $A$, $A^{\times n}$ denotes the cartesian product  $\underbrace{A\times \cdots \times A}_{n \text{ times}}.$  If $G$ is a group and $A\subseteq G$, then $A^n$ denotes the set of $n$-fold products from $A$, i.e. $$A^n:=\{a_1\cdots a_n \ | \ a_i\in A \text{ for all }i=1,\ldots,n\}.$$  For any topological space $X$ and any $a\in X$, we let $$\mu_X(a):=\bigcap \{ \mathcal{O}^* \ | \ \mathcal{O} \text{ is an open neighborhood of }a \text{ in }X\}.$$  If the space $X$ is clear from context, we write $\mu(a)$ instead of $\mu_X(a)$.  We also let $X_{\ns}:=\bigcup_{a\in X} \mu(a)$.  

\

\noindent Now suppose $X$ is a locally convex space and $\Gamma$ is a set of seminorms defining the topology on $X$.  Then for $Y$ an internal subset of $X^*$, we define the set $$Y_f:=\{ x\in Y \ | \ p(x)\in \r_{\ns} \text{ for all }p\in \Gamma\}.$$  If $Y=X^*$, we will just write $X_f$ for this set instead of $X_f^*$.  We will also let $\mu_Y(0)=\{ x\in Y \ | \ p(x)\in \mu_\r(0) \text{ for all }p\in \Gamma\}$ and we sometimes write $\mu(Y)$ or $\mu_Y$ for this set.  As before, if $Y=X^*$, we just write $\mu(X)$ or $\mu_X$, and note that this is equal to $\mu_X(0)$ as defined in the previous paragraph.  Finally, for $a,b\in X^*$, we write $a\sim b$ if $a-b\in \mu(X)$.

\

\noindent I would like to thank Lou van den Dries, Ward Henson, and Karl-Hermann Neeb for very helpful discussions.

\

\section{Nonstandard Hulls of Internal Lie Algebras}

\

\noindent In this section, we work with the following setting.  We let $\g$ be a locally convex Lie algebra, i.e. $\g$ is a locally convex space equipped with a continuous Lie bracket $[\cdot,\cdot]:\g \times \g \to \g$.  We let $\gamg$ denote the set of \textit{all} continuous seminorms on $\g$.  We further suppose that $\h$ is an internal subalgebra of $\g^*$, i.e. $\h$ is an internal linear subspace of $\g$ such that $[\h,\h]\subseteq \h$.  Our goal in this section is to form the nonstandard hull of $\h$.  

\

\begin{lemma}
$\h_f$ is a real Lie algebra and $\mu_\h$ is a Lie ideal of $\h_f$.
\end{lemma}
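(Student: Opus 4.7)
The plan is to reduce everything to a single quantitative estimate coming from joint continuity of the Lie bracket. The key claim is: for every $p \in \gamg$ there is some $q \in \gamg$ with
\[ p([x,y]) \leq q(x)\,q(y) \quad \text{for all } x,y\in \g. \]
To establish this, I would use that the Lie bracket is continuous at $(0,0) \in \g\times\g$, so the preimage of $V(p,1)$ contains a basic neighborhood of $(0,0)$; this gives continuous seminorms $q_1,q_2$ and $\epsilon>0$ with $q_1(x),q_2(y)\leq \epsilon \Rightarrow p([x,y])\leq 1$. Setting $q := \tfrac{1}{\epsilon}(q_1+q_2)$ (itself a continuous seminorm), the implication becomes $q(x),q(y)\leq 1 \Rightarrow p([x,y])\leq 1$, and then a standard rescaling argument using the bilinearity of $[\cdot,\cdot]$ (together with a separate check for the degenerate case $q(x)=0$ or $q(y)=0$, where scaling forces $p([x,y])=0$) upgrades this to the homogeneous inequality above.

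Next I would transfer this inequality: it then holds for all $x,y \in \g^*$ (with the usual abuse of notation writing $p,q,[\cdot,\cdot]$ for their nonstandard extensions). From this everything follows almost mechanically. For $\h_f$ being a real Lie subalgebra: closure under addition and real scalar multiplication is immediate from subadditivity and positive homogeneity of seminorms, together with the fact that $\r_{\ns}$ is closed under these operations. For closure under the bracket, if $x,y\in \h_f$, then for each $p\in \gamg$ choose $q$ as above; since $q(x),q(y)\in \r_{\ns}$, the inequality forces $p([x,y])\in \r_{\ns}$ for every $p$, so $[x,y]\in \h_f$. Internal closure of $\h$ under the bracket ensures $[x,y]\in\h$ in the first place.

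For $\mu_\h$ being a Lie ideal of $\h_f$: closure of $\mu_\h$ under addition and real scalar multiplication is the same observation with $\mu_\r(0)$ in place of $\r_{\ns}$. For the absorbing property, given $x\in \mu_\h$ and $y\in \h_f$, choose $q$ attached to each $p$ as before; then $q(x)\in \mu_\r(0)$ and $q(y)\in \r_{\ns}$, and since $\mu_\r(0)$ is an ideal in $\r_{\ns}$, the product $q(x)q(y)$ is infinitesimal, forcing $p([x,y])\in \mu_\r(0)$. This holds for every $p\in\gamg$, so $[x,y]\in\mu_\h$.

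The one step requiring genuine care is the derivation of the seminorm estimate $p([x,y])\leq q(x)q(y)$ from joint continuity; everything else is a routine combination of transfer with the elementary arithmetic of finite and infinitesimal hyperreals. Note that this argument uses nothing about $\h$ beyond its being an internal linear subspace closed under the bracket, so the transferred inequality is what does all the work in converting topological continuity of $[\cdot,\cdot]$ into the algebraic properties of $\h_f$ and $\mu_\h$.
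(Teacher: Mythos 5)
Your proof is correct, but it is organized around a different key lemma than the paper's. You first extract from continuity at $(0,0)$ the clean standard submultiplicative estimate $p([x,y])\leq q(x)\,q(y)$ (valid for all $x,y\in\g$, with the degenerate case $q(x)=0$ handled by scaling), transfer it, and then read off both closure of $\h_f$ under the bracket and the absorbing property of $\mu_\h$ from the arithmetic of finite and infinitesimal hyperreals. The paper never states such an inequality: it works pointwise inside $\g^*$, choosing for each pair of points a standard scalar $\alpha$ adapted to them ($q(\alpha x),q(\alpha y)<r$ for the subalgebra claim; $q(\alpha x)<r$ and $q(\frac{1}{\alpha}y)<r$ for the ideal claim) and exploiting bilinearity to undo the scaling. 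The two arguments rest on exactly the same idea --- continuity at the origin plus homogeneity of the bracket --- but your version frontloads all the analysis into a single reusable standard inequality, after which the nonstandard part is purely mechanical, at the cost of the (routine) degenerate-case check; the paper's version avoids proving any standard lemma but must redo the scaling separately for the finite--finite and finite--infinitesimal cases. Both correctly use that $\h$ is internally closed under the bracket to know $[x,y]\in\h$ at all, and both treat the vector-space structure of $\h_f$ and $\mu_\h$ as routine.
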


\begin{proof}
It is well-known and easy to see that $\h_f$ is a real vector space and $\mu_\h$ is a real subspace of $\h_f$.  We first show that $[\h_f,\h_f]\subseteq \h_f$.  Since $[\cdot,\cdot]$ is continuous at $(0,0)$, given $p\in \Gamma_\g$, there exist $q\in \gamg$ and $r\in \r^{>0}$ such that for all $a,b\in \g$, if $q(a),q(b)<r$, then $p([a,b])<1$.  Since $x,y\in \h_f$, we can choose $\alpha\in \r^{>0}$ so that $q(\alpha x),q(\alpha y)<r$.  Then $p([\alpha x,\alpha y])<1$, whence $p([x,y])<\frac{1}{\alpha^2}$.  It thus follows that $[x,y]\in \h_f$ and $\h_f$ is a real Lie algebra.

\

\noindent It remains to show that $[\h_f,\mu_\h]\subseteq \mu_\h$.  Suppose $x\in \h_f$ and $y\in \mu_\h$.  Let $p\in \gamg$ and let $\epsilon \in \r^{>0}$.  By continuity of $[\cdot,\cdot]$ at $(0,0)$, there exists $q \in \gamg$ and $r\in \r^{>0}$ such that for all $a,b\in \g$, if $q(a),q(b)<r$, then $p([a,b])<\epsilon$.  Since $x\in \h_f$, we can choose $\alpha \in \r^{>0}$ so that $q(\alpha x)<r$.  Since $y\in \mu_\h$, we have that $q(\frac{1}{\alpha}y)<r$, whence we can conclude that $p([x,y])=p([\alpha x,\frac{1}{\alpha}y])<\epsilon$.  But $[\alpha x,\frac{1}{\alpha}y]=[x,y]$, whence we see that $p([x,y])<\epsilon$.  Since $p$ and $\epsilon$ were arbitrary, we see that $[x,y]\in \mu_\h$. 
\end{proof}

\

\noindent We can now define the nonstandard hull of $\h$ to be $\hh:=\h_f/\mu_\h$, which is a real Lie algebra.  (The Lie bracket is given by $[x+\mu_\h,y+\mu_\h]:=[x,y]+\mu_\h$.)  Let $\pi_\h:\h_f\to \hh$ denote the canonical projection map.  For $p\in \gamg$, define $\p:\hh\to \r$ by $\p(x+\mu_\h):=\st(p(x))$.  (This is well-defined by the observation that for all $x,y\in \g$, $|p(x)-p(y)|\leq p(x-y)$.)  

\

\noindent Let $\gamh:=\{\p \ | \ p\in \gamg\}.$  We claim that $\gamh$ is a separating family of seminorms on $\hh$.  It is trivial to verify that each $\p$ is a seminorm.  To see that $\gamh$ is separating, note that if $x\in \h_f\setminus \mu_\h$, then for some $p\in \gamg$ and some $\epsilon\in \r^{>0}$, $p(x)\geq \epsilon$.  Consequently, $\p(x+\mu_\h)\geq \epsilon$.

\

\noindent We now see that $\hh$ equipped with the family of seminorms $\gamh$ is a locally convex space.  It remains to show that the Lie bracket of $\hh$ is continuous with respect to the locally convex topology just given to $\hh$.

\

\begin{lemma} $[\cdot,\cdot]:\hh \times \hh \to \hh$ is continuous and thus $\hh$ is a locally convex Lie algebra.
\end{lemma}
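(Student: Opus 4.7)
The plan is to reduce to showing continuity of $[\cdot,\cdot]:\hh\times\hh\to\hh$ at $(0,0)$, since a bilinear map between topological vector spaces that is continuous at the origin is automatically jointly continuous (via the expansion $[x_0+h,y_0+k]=[x_0,y_0]+[x_0,k]+[h,y_0]+[h,k]$ together with bilinear rescaling). Well-definedness of the bracket on $\hh$ is already guaranteed by the previous lemma, so only continuity remains at stake.

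Fix $\p\in\gamh$ arising from $p\in\gamg$, and fix $\epsilon\in\r^{>0}$. Continuity of the bracket in $\g$ at $(0,0)$ yields $q\in\gamg$ and $r\in\r^{>0}$ such that, for all $a,b\in\g$ with $q(a),q(b)<r$, we have $p([a,b])<\epsilon/2$. By transfer, the same implication holds for all $a,b\in\g^*$, and in particular for internal $a,b\in\h$.

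Now suppose $\bar x,\bar y\in\hh$ satisfy $\hat q(\bar x),\hat q(\bar y)<r/2$, where $\hat q\in\gamh$ is the seminorm induced by $q$, and pick representatives $x,y\in\h_f$. Then $\st(q(x))<r/2$, and since $q(x)$ is within an infinitesimal of its standard part, we obtain $q(x)<r$, and likewise $q(y)<r$. The transferred estimate then gives $p([x,y])<\epsilon/2$, so $\p([\bar x,\bar y])=\st(p([x,y]))\le\epsilon/2<\epsilon$. This is the desired continuity of $[\cdot,\cdot]$ at the origin, hence everywhere.

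The only delicate point is the passage from $\st(q(x))<r/2$ to the internal strict inequality $q(x)<r$: the slack of $r/2$ is precisely what absorbs a possible infinitesimal overshoot of $q(x)$ above its standard part. (Had one worked with the non-strict bound $\hat q(\bar x)\le r$, no such conclusion would follow.) Apart from this bookkeeping, the proof is essentially a transfer of the standard continuity of the bracket on $\g$ to the internal setting, followed by taking standard parts, so I do not anticipate any serious obstacle.
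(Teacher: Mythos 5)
Your proof is correct and follows essentially the same route as the paper: reduce to continuity of the bracket at the origin and then transfer the standard estimate for $[\cdot,\cdot]$ on $\g$ and take standard parts (note that $\st(q(x))<r$ already forces $q(x)<r$, so the $r/2$ slack is harmless but unnecessary). The only cosmetic difference is that you invoke the general fact that a bilinear map on a topological vector space continuous at $(0,0)$ is jointly continuous, whereas the paper verifies separate continuity explicitly by the same rescaling argument and then combines the two; both versions of the reduction are valid.
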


\begin{proof}
We first show the continuity of $[\cdot,\cdot]$ at $(0+\mu_\h,0+\mu_\h)$.  Let $\p\in \gamh$ and let $\epsilon \in \r^{>0}$.  Fix $\epsilon'\in \r^{>0}$ with $\epsilon'<\epsilon$.  Choose $q\in \gamg$ and $r\in \r^{>0}$ so that if $a,b\in \g$ and $q(a),q(b)<r$, then $p([a,b])<\epsilon'$.  Then if $\hat{q}(x+\mu_\h),\hat{q}(y+\mu_\h)<r$, one has $\p([x,y]+\mu_\h)\leq \epsilon'<\epsilon$.

\

\noindent We next show that for any $c+\mu_\h\in \hh$, the map $$x+\mu_\h\mapsto [c,x]+\mu_\h:\hh \to \hh$$ is continuous at $0+\mu_\h$ (and hence continuous on all of $\hh$ since the aforementioned map is linear).  Fix $\p\in \gamh$ and $\epsilon\in \r^{>0}$.  Fix $\epsilon'\in \r^{>0}$ with $\epsilon'<\epsilon$.  As in the above paragraph, choose $q\in \gamg$ and $r\in \r^{>0}$ so that if $a,b\in \g$ satisfy $q(a),q(b)<r$, then $p([a,b])<\epsilon'$.  Choose $\alpha \in \r^{>0}$ so that $q(\alpha c)<r$.  Now suppose $\hat{q}(x+\mu_\h)<\alpha r$.  Then $q(\frac{1}{\alpha}x)<r$, whence $p([\alpha c,\frac{1}{\alpha}x])<\epsilon'$.  Thus, $\p([c,x]+\mu_\h)<\epsilon$.

\

\noindent An analogous argument shows that for any $c+\mu_\h \in \hh$, the map $$x+\mu_\h\mapsto [x,c]+\mu_\h:\hh \to \hh$$ is also continuous.  We can thus conclude that $[\cdot,\cdot]:\hh \to \hh$ is continuous from the fact that for a topological vector space $X$, a bilinear map $T:X\times X \to X$ is continuous if it is continuous at $(0_X,0_X)$ and if for each $a\in X$, the functions $x\mapsto T(a,x):X\to X$ and $x\mapsto T(x,a):X\to X$ are continuous.  (This is probably well known, but here is a nonstandard proof of this.  Suppose $(a,b)\in X\times X$ and $(c,d)\in \mu(a,b)$.  Then 
$$T(a,b)-T(c,d)=T(a-c,b)+T(c-a,b-d)+T(a,b-d),$$ which is in $\mu(X)$ by our assumptions.)
\end{proof}

\

\begin{rmk}
It is obvious that the linear map $\iota:\g \to \hat{\g^*}$ given by $\iota(x)=x+\mu_\h$ is such that for every $p\in \gamg$ and every $x\in \g$, one has $p(x)=\p(\iota(x))$.  In particular, $\iota$ is a continuous injection.
\end{rmk}

\begin{rmk}
An easy saturation argument shows that $\hh$ is a closed subspace of $\hat{\g^*}$.  Since $\hat{\g^*}$ is complete (see Theorem 3.15.1 of \cite{L}), it follows that $\hh$ is complete as well.
\end{rmk}

\

\section{Nonstandard Differentiability Conditions in Locally Convex Spaces}

\

\noindent In this section, we define a nonstandard notion of smoothness for functions between locally convex spaces which is stronger than the standard notion of smoothness and show how such functions induce (standardly) smooth functions on the nonstandard hulls.  We then introduce a standard strengthening of smoothness which implies our nonstandard notion.  Finally, we show that for certain locally convex spaces, our nonstandard notion is equivalent to ordinary smoothness.

\

\noindent Throughout this section, we assume $E$ and $F$ are locally convex spaces, $U\subseteq E$ is open, and $f:U\to F$ is a function.  Before we enter our discussion of differentiability, we first provide the following easy lemma, which may be well-known but is included here for the sake of completeness.  Using the terminology of \cite{S}, we let $\lin^k(E^*,F^*)$ denote the space of internal $k$-linear maps from $E^*$ to $F^*$ and we introduce the space $$\flin^k(E^*,F^*)=\{T\in \lin^k(E^*,F^*) \ | \ T((E_f)^k))\subseteq F_f\}.$$  We let $\flin(E^*,F^*)$ denote $\flin^1(E^*,F^*)$.

\

\begin{lemma}\label{L:flin}
Suppose $T\in \lin(E^*,F^*)$.  Then $T\in \flin(E^*,F^*)$ if and only if $T(\mu(E))\subseteq \mu(F).$
\end{lemma}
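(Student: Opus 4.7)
The plan is to prove the two implications by complementary scaling arguments, each exploiting the linearity of $T$ to turn a seminorm estimate into a contradiction.

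The backward direction $(\Leftarrow)$ is the easier one. Assuming $T(\mu(E))\subseteq \mu(F)$, suppose for contradiction that some $x\in E_f$ has $T(x)\notin F_f$, so $p(T(x))$ is positive and infinite for some continuous seminorm $p$ on $F$. I would set $\lambda := 1/\sqrt{p(T(x))}$, a positive infinitesimal, and observe that $\lambda x\in \mu(E)$: for every continuous seminorm $q$ on $E$, $q(\lambda x) = \lambda q(x)$ is the product of an infinitesimal and a finite hyperreal. Then by hypothesis $T(\lambda x) = \lambda T(x)\in \mu(F)$, yet $p(\lambda T(x)) = \sqrt{p(T(x))}$ is infinite, the desired contradiction.

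For the forward direction $(\Rightarrow)$, assume $T(E_f)\subseteq F_f$ and suppose for contradiction that $x\in \mu(E)$ but $T(x)\notin \mu(F)$. Fix a continuous seminorm $p$ on $F$ and a standard $\epsilon>0$ with $p(T(x))\geq \epsilon$. The strategy is to dilate $x$ by an infinite scalar $\lambda$ while keeping $\lambda x\in E_f$: if this succeeds, the hypothesis forces $\lambda T(x) = T(\lambda x)\in F_f$, yet $p(\lambda T(x))\geq \lambda\epsilon$ is infinite, giving the contradiction.

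The main technical step is producing such a $\lambda$. What is needed is an infinite $\lambda \in {}^*\r$ with $\lambda q(x) < 1$ for every continuous seminorm $q$ on $E$. I would argue this by saturation: the external family of conditions $\{\lambda > n : n \in \n\}\cup \{\lambda q(x) < 1 : q \text{ a continuous seminorm on } E\}$ is finitely satisfiable, because each $q(x)$ is infinitesimal, so for any finite subcollection one has $\min_i\!\bigl(1/q_i(x)\bigr)$ positive infinite (with the convention that the constraint is vacuous when $q_i(x)=0$), leaving room for $\lambda$ to be chosen as large as desired. Sufficient saturation of the ambient extension then produces the required $\lambda$. The potentially large cardinality of the external set of all continuous seminorms on $E$ is precisely why the paper's blanket assumption of a sufficiently saturated nonstandard extension is invoked at this point.
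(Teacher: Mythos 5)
Your proof is correct and follows essentially the same route as the paper's: both directions are contrapositive scaling arguments, dilating or shrinking $x$ by an infinite factor and using linearity of $T$ to propagate the contradiction through a seminorm estimate. The only difference is that where the paper cites Theorem 1.6 of Henson--Moore for the existence of an infinite scalar $N$ with $Nx\in\mu(E)$, you reprove that fact directly by a saturation argument (obtaining $\lambda x\in E_f$, which suffices); this is exactly how that cited result is established, so nothing is gained or lost.
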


\begin{proof}
First suppose that $T\in \flin(E^*,F^*)$  yet there is $x\in \mu(E)$ with $T(x)\notin \mu(F)$.  Choose $N\in \n^*\setminus \n$ such that $Nx\in \mu(E)$; such $N$ exists by Theorem 1.6 of \cite{HM}.  But now $T(Nx)=NT(x)\notin F_f$, a contradiction.  

\

\noindent Next suppose that $T(\mu(E))\subseteq \mu(F)$ yet there is $x\in E_f$ such that $T(x)\notin F_f$.  Choose a continuous seminorm $p$ on $F$ and $N\in \n^*\setminus \n$ so that $p(T(x))>N$.  Then $\frac{1}{N}x\in \mu(E)$ while $p(T(\frac{1}{N}x))>1$, contradicting the assumption.
\end{proof}

\

\noindent Let us formulate a higher-order analog of Lemma \ref{L:flin}, whose proof we leave to the reader.

\

\begin{lemma}\label{L:flinhigherorder}
Suppose $T\in \lin^k(E^*,F^*)$.  Then $T\in \flin^k(E^*,F^*)$ if and only if whenever $a_1,\ldots,a_k\in E_f$ and $a_i\in \mu(E)$ for some $i\in \{1,\ldots,k\}$, then $T(a_1,\ldots,a_k)\in \mu(F)$.
\end{lemma}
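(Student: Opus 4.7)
The plan is to mimic the two-step scaling argument from the proof of Lemma \ref{L:flin}, with one extra twist for the multilinear setting.

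For the forward implication, assume $T\in\flin^k(E^*,F^*)$ and let $a_1,\ldots,a_k\in E_f$ with, without loss of generality, $a_1\in\mu(E)$. Following the proof of Lemma \ref{L:flin}, I would apply Theorem 1.6 of \cite{HM} to pick $N\in\n^*\setminus\n$ with $Na_1\in\mu(E)\subseteq E_f$. Since the other $a_j$ remain in $E_f$, the hypothesis gives $T(Na_1,a_2,\ldots,a_k)\in F_f$. By $k$-linearity this equals $N\cdot T(a_1,\ldots,a_k)$, so for every continuous seminorm $p$ on $F$ the quantity $Np(T(a_1,\ldots,a_k))$ is finite, forcing $p(T(a_1,\ldots,a_k))\in\mu(\r)$. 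As $p$ was arbitrary, $T(a_1,\ldots,a_k)\in\mu(F)$.

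For the converse, I would argue by contradiction. Suppose the stated condition holds but some $a_1,\ldots,a_k\in E_f$ satisfy $T(a_1,\ldots,a_k)\notin F_f$, witnessed by a continuous seminorm $p$ on $F$ with $R:=p(T(a_1,\ldots,a_k))$ positive infinite. Set $\lambda:=R^{-1/k}$; the positive $k$-th root is available in $\r^*$ by transfer, and $\lambda$ is infinitesimal because $R$ is positive infinite. Since each $a_i\in E_f$, multiplication by the infinitesimal $\lambda$ puts each $\lambda a_i$ into $\mu(E)$, so the hypothesis applies and forces $T(\lambda a_1,\ldots,\lambda a_k)\in\mu(F)$. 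Yet multilinearity gives $p(T(\lambda a_1,\ldots,\lambda a_k))=\lambda^k R=1$, a contradiction.

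There is no serious obstacle here; the statement is a clean higher-order analog of Lemma \ref{L:flin}. The only noteworthy point is that in the converse one must distribute the infinitesimal scaling across \emph{every} coordinate so that the factor $\lambda^k$ exactly cancels the size $R$ of the original output; scaling a single argument (as in the case $k=1$) is not enough, which is precisely why the $k$-th root appears.
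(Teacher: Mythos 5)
Your proof is correct and is precisely the scaling argument the paper intends (it leaves the proof to the reader as the direct analog of Lemma \ref{L:flin}, which is what you have written). One small correction to your closing remark: distributing the infinitesimal over all $k$ coordinates is \emph{not} necessary --- scaling a single coordinate by $R^{-1}$ already places that coordinate in $\mu(E)$ while the others stay in $E_f$, and multilinearity rescales the output's $p$-value to exactly $1\notin\mu(\r)$, giving the same contradiction; the $k$-th root is a valid but unneeded flourish.
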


\

\noindent We now recall the (standard) notion of smoothness that appears in the survey \cite{N} and the stronger (nonstandard) notion defined by Stroyan in \cite{S}.

\

\begin{df}
Let $a\in U$.  Then $f$ is \textbf{differentiable at $a$} if for all $h\in E$, the limit 
$$\lim_{t\to 0} \frac{1}{t}(f(a+th)-f(a))$$ exists.  We denote this limit by $df(a)(h)$ or $D_hf(a)$.  We say that $f$ is \textbf{differentiable} if $f$ is differentiable at $a$ for all $a\in U$.  We say that $f$ is \textbf{$C^1$} if $f$ is differentiable and the map $df:U\times E \to F$ is continuous.  $f$ is said to be $C^k$ if it  is continuous, the iterated directional derivatives $$d^jf(a)(h_1,\ldots,h_j):=(D_{h_j}\cdots D_{h_1}f)(a)$$ exists for all $j\in \{1,\ldots,k\}$, $a\in U$, and $h_1,\ldots, h_j\in E$ and all maps $d^jf:U\times E^j\to F$ are continuous.  Finally, we say that $f$ is \textbf{smooth} if $f$ is $C^k$ for all $k$.
\end{df}

\

\noindent \textbf{Notation:}  If $U$ is an open subset of $E$, we let $$\int(U^*)=\{a\in U^* \ | \ \mu(a)\subseteq U^*\}.$$

\

\begin{df} (Stroyan, \cite{S})
$f$ is \textbf{uniformly differentiable} if there is a map $df:U\to \lin(E,F)$ such that for every $a\in \int(U^*)\cap E_{\ns}$, one has $df(a)\in \flin(E^*,F^*)$, and for every $h\in E_f$ and for every positive $\delta\in \mu(\r)$, we have 
$$\frac{1}{\delta}(f(a+\delta h)-f(a))\sim df(a)(h).$$  The notion $f$ is \textbf{uniformly $C^k$} is defined recursively as follows.  $f$ is uniformly $C^1$ means $f$ is uniformly differentiable.  Suppose $f$ is uniformly $C^k$.  Then we say $f$ is uniformly $C^{k+1}$ if there is a map $d^{k+1}f:U\to \lin^{k+1}(E,F)$ so that whenever $a\in \int(U^*)\cap E_{\ns}$, then $d^{k+1}f(a)\in \flin^{k+1}(E^*,F^*)$ and whenever $x\in E_f$, $h\in (E_f)^k$, and $\delta\in \mu(\r)$ is positive, we have
$$\frac{1}{\delta}(d^kf(a+\delta x)(h)-d^kf(a)(h))\sim d^{k+1}f(a)(h,x).$$  We say that $f$ is \textbf{uniformly smooth} if $f$ is uniformly $C^k$ for every $k$.
\end{df} 

\

\noindent The notion of being uniformly $C^k$ is really a strengthening of the notion of being $C^k$.

\

\begin{lemma}
Suppose $f$ is uniformly $C^k$.  Then $f$ is $C^k$.
\end{lemma}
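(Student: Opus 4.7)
The plan is to proceed by induction on $k$, with the base case $k=1$ absorbing the technical work. In the base case, assuming $f$ is uniformly $C^1$, three things need verification: continuity of $f$ on $U$, existence of the standard directional derivatives at every point (matching the supplied $df$), and joint continuity of $df\colon U\times E\to F$. Continuity of $f$ at a standard $a_0\in U$ is immediate: for $a\in\mu(a_0)$, Theorem~1.6 of \cite{HM} yields a positive infinitesimal $\delta$ and $h\in E_f$ with $a-a_0=\delta h$; uniform differentiability then gives $f(a)-f(a_0)\sim\delta\cdot df(a_0)(h)$, and since $df(a_0)\in\flin$ we have $df(a_0)(h)\in F_f$ by Lemma~\ref{L:flin}, so the product lies in $\mu(F)$. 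Existence of directional derivatives at standard $(a,h)$ handles positive infinitesimal $t$ directly from the definition; for negative $t=-\delta$, one writes $\tfrac{1}{t}(f(a+th)-f(a))=\tfrac{1}{\delta}(f(a)-f(a-\delta h))$ and applies uniform differentiability at $a$ in the direction $-h$ to match $df(a)(h)$.

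The bulk of the base case is joint continuity of $df$. Given $(a_0,h_0)\in U\times E$ standard and $(a,h)\in\mu(a_0)\times\mu(h_0)$, I decompose
\[
df(a)(h)-df(a_0)(h_0)\;=\;df(a)(h-h_0)\;+\;\bigl[df(a)(h_0)-df(a_0)(h_0)\bigr].
\]
The first summand lies in $\mu(F)$ by Lemma~\ref{L:flin}, since $df(a)\in\flin$ and $h-h_0\in\mu(E)$. For the second, I would apply uniform differentiability with direction $h_0$ at both $a$ and $a_0$ and a common positive infinitesimal step $\delta$ to obtain
\[
df(a)(h_0)-df(a_0)(h_0)\;\sim\;\tfrac{1}{\delta}\bigl[(f(a+\delta h_0)-f(a_0+\delta h_0))-(f(a)-f(a_0))\bigr],
\]
then apply uniform differentiability again in direction $w:=(a-a_0)/\delta\in E_f$ at the base points $a_0+\delta h_0$ and $a_0$ (choosing $\delta$ via Theorem~1.6 of \cite{HM} so that $w$ is indeed finite) to rewrite both bracketed $f$-differences. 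This collapses the problem to showing that the residual term $df(a_0+\delta h_0)(w)-df(a_0)(w)$ is infinitesimal.

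The main obstacle is precisely this residual estimate: essentially a ``first-argument continuity'' of $df$ at $a_0$ along the infinitesimal direction $\delta h_0$, evaluated on a possibly non-nearstandard $w\in E_f$. Resolving it requires a careful iteration that genuinely uses the full uniformity of the approximation over all $h\in E_f$ (not merely $h\in E_{\ns}$) which is what distinguishes uniform differentiability from ordinary differentiability.

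For the inductive step, suppose the result holds through $k$ and that $f$ is uniformly $C^{k+1}$. The recursive definition immediately gives that $f$ is uniformly $C^k$, hence $C^k$ by the inductive hypothesis, and also provides a map $d^{k+1}f\colon U\to\lin^{k+1}(E,F)$ such that, for each fixed $(h_1,\ldots,h_k)\in(E_f)^k$, the function $a\mapsto d^kf(a)(h_1,\ldots,h_k)$ is uniformly differentiable with derivative $x\mapsto d^{k+1}f(a)(h_1,\ldots,h_k,x)$. Applying the base-case argument to this setup yields existence and joint continuity of $d^{k+1}f$ on $U\times E^{k+1}$, and the identification $d^{k+1}f(a)(h_1,\ldots,h_{k+1})=D_{h_{k+1}}[d^kf(\cdot)(h_1,\ldots,h_k)](a)$ delivers the $C^{k+1}$ conclusion.
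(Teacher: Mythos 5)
There is a genuine gap, and you name it yourself: you never actually prove that the ``residual term'' $df(a_0+\delta h_0)(w)-df(a_0)(w)$ is infinitesimal. You only observe that closing it ``requires a careful iteration.'' But your reduction is circular: $df(a_0+\delta h_0)(w)-df(a_0)(w)$ is again a difference $df(b)(v)-df(b')(v)$ with $b\sim b'$ in $\int(U^*)\cap E_{\ns}$ and $v\in E_f$ --- exactly the shape of statement you set out to prove. Moreover, your choice of $\delta$ (via Theorem~1.6 of \cite{HM}) only arranges $w=(a-a_0)/\delta\in E_f$, which is too weak to let the iteration terminate.

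The missing idea is to use Theorem~1.6 of \cite{HM} more aggressively: since $a-a_0\in\mu(E)$, one can choose the positive infinitesimal $\delta$ so that $w:=(a-a_0)/\delta$ is \emph{itself infinitesimal}, not merely finite. Then one application of uniform differentiability at $a$ gives $\frac{1}{\delta}\bigl(f(a+\delta h_0)-f(a)\bigr)\sim df(a)(h_0)$, while another at $a_0$ in direction $h_0+w$ gives $\frac{1}{\delta}\bigl(f(a+\delta h_0)-f(a_0)\bigr)\sim df(a_0)(h_0+w)\sim df(a_0)(h_0)$, the last step using linearity of $df(a_0)$, $w\in\mu(E)$, and $df(a_0)\in\flin$ via Lemma~\ref{L:flin}. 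Subtracting, $df(a)(h_0)-df(a_0)(h_0)\sim\frac{1}{\delta}\bigl(f(a)-f(a_0)\bigr)=\frac{1}{\delta}\bigl(f(a_0+\delta w)-f(a_0)\bigr)\sim df(a_0)(w)\in\mu(F)$, again by Lemma~\ref{L:flin}. This is, in substance, Stroyan's Proposition~2.4, which the paper simply cites at exactly the point where your argument stalls; your decomposition of $df(a)(h)-df(a_0)(h_0)$ into the two summands is the same as the paper's, so once the above estimate is in hand the two proofs coincide.
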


\

\begin{proof}
For simplicity, we only prove this for the case $k=1$, the higher order cases being similar.  The assumption of uniformly differentiable clearly implies that $f$ is differentiable.  What is left to show is the map $df:U\times E\to F$ is continuous.  Suppose $a\in U$, $a'\in \mu(a)$, $h\in E$, $h'\in \mu(h)$.  We must show $df(a)(h)\sim df(a')(h')$.  By Lemma \ref{L:flin}, we know $df(a)(h)\sim df(a)(h')$.  By Proposition 2.4 of \cite{S}, $df(a)(h')\sim df(a')(h')$.  This completes the proof.
\end{proof}

\

\noindent It seems for our purposes that we will need the following strengthening of Stroyan's definition.

\

\begin{df}
$f$ is \textbf{uniformly differentiable at finite points} if there is a map $df:U\to \lin(E,F)$ such that, for every $a\in \int(U^*)\cap E_f$, one has $df(a)\in \flin(E^*,F^*)$, and for every $h\in E_f$ and for every positive $\delta\in \mu(\r)$, we have
$$\frac{1}{\delta}(f(a+\delta h)-f(a))\sim df(a)(h).$$
\end{df} 

\

\begin{ex} \label{L:stroyanex} (Stroyan, \cite{S})  Let $E=F=\r^\n$ be given its usual structure as a locally convex space, i.e. the topology is generated by the seminorms $p_j$ ($j\in \n$), where, for $a=(a_i)\in E$, $p_j(a):=\max \{|a_1|,\ldots,|a_j|\}$.    Let $f:E\to F$ be the map $f(a)=(\sin (ia_i))$.  Let $a,x\in E_f$ (i.e. $a_i,x_i\in \r_f$ for $i\in \n$).  Then 
\begin{alignat}{2}
(\frac{1}{\delta}((f(a+\delta x)-f(a)))_i&=\sin(ia_i)\frac{\cos(ix_i)-1}{\delta}+\cos(ia_i)\frac{sin(i\delta x_i)}{\delta}\notag \\
						&=ix_i\cos(ia_i)+\delta\cdot z_i \notag \\ \notag
\end{alignat}

\noindent where $z_i\in \r_f$.  By defining $df:E\to \lin(E,F)$ by $df(a)(x)(i):=i\cos(ia_i)x_i$, we see that $f$ is uniformly differentiable at finite points.
\end{ex}

\

\begin{lemma}\label{L:diffcont}
Suppose $f$ is uniformly differentiable at finite points.  Then $f$ is S-continuous at finite points, i.e. if $w\in  \int(U^*)\cap E_f$ and $z\sim w$, then $f(w)\sim f(z)$.
\end{lemma}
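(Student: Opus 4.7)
The plan is to convert the arbitrary infinitesimal increment $z-w$ into the controlled form $\delta h$ with $\delta\in\mu(\r)$ positive and $h\in E_f$, so that the defining estimate of uniform differentiability at finite points can be applied directly at $w$. First I would set $h_0:=z-w\in\mu(E)$; if $h_0=0$ we are done. Otherwise, invoking Theorem 1.6 of \cite{HM} (the same underflow/overflow tool used in the proof of Lemma \ref{L:flin}), I pick $N\in\n^*\setminus\n$ with $Nh_0\in\mu(E)$. Setting $\delta:=1/N$ (positive infinitesimal) and $h:=Nh_0\in\mu(E)\subseteq E_f$, one has $z=w+\delta h$.

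Since $w\in\int(U^*)\cap E_f$, the definition of uniform differentiability at finite points yields
$$\frac{1}{\delta}\bigl(f(z)-f(w)\bigr)=\frac{1}{\delta}\bigl(f(w+\delta h)-f(w)\bigr)\sim df(w)(h).$$
Moreover, since $df(w)\in\flin(E^*,F^*)$ (again by the definition) and $h\in\mu(E)$, Lemma \ref{L:flin} gives $df(w)(h)\in\mu(F)$.

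To finish, I multiply the $\sim$-relation through by $\delta$: for any $p\in\Gamma_F$ (the continuous seminorms on $F$),
$$p\bigl(f(z)-f(w)-\delta\cdot df(w)(h)\bigr)=|\delta|\cdot p\!\left(\frac{f(z)-f(w)}{\delta}-df(w)(h)\right)\in\mu(\r),$$
so $f(z)-f(w)\sim\delta\cdot df(w)(h)$. Since $\delta\in\mu(\r)$ and $df(w)(h)\in\mu(F)\subseteq F_f$, the right-hand side itself lies in $\mu(F)$; hence $f(z)-f(w)\in\mu(F)$, i.e.\ $f(w)\sim f(z)$, as required. The only substantive step is the opening one (expressing an arbitrary infinitesimal as a product of an infinitesimal scalar and a vector we can push through the definition); everything after it is routine bookkeeping with the seminorm estimates and with Lemma \ref{L:flin}.
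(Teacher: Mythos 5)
Your proof is correct and is essentially the paper's argument: both invoke Theorem 1.6 of \cite{HM} to write the infinitesimal difference as $\delta h$ with $\delta=1/N$ infinitesimal and $h\in\mu(E)$, apply the defining estimate of uniform differentiability, use Lemma \ref{L:flin} to see $df(\cdot)(h)\in\mu(F)$, and rescale by $\delta$. The only cosmetic difference is that you base the difference quotient at $w$ while the paper bases it at $z$ (which also lies in $\int(U^*)\cap E_f$ since $z\sim w$), so the two are interchangeable.
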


\

\begin{proof}
Fix $w$ and $z$ as in the statement of the lemma.  Again by Theorem 1.6 of \cite{HM}, there exists $N\in \n^*\setminus \n$ such that $x:=N(w-z)\in \mu(E)$.  Let $\delta:=\frac{1}{N}$.  By uniform differentiability at $z$, there is $\eta \in \mu(F)$ such that $$\frac{1}{\delta}[f(z+\delta x)-f(z)]=df(z)(x)+\eta,$$ i.e. $$\frac{1}{\delta}[f(w)-f(z)]=df(z)(N(w-z))+\eta.$$  Hence $f(w)-f(z)=df(z)(w-z)+\delta \cdot \eta\in \mu(F)$ by Lemma \ref{L:flin}.
\end{proof}

\

\begin{lemma}\label{L:dfdiffcont}
If $f$ is uniformly differentiable at finite points, then $df$ is S-continuous at finite points, i.e. if $a,a'\in \int(U^*)\cap E_f$ are such that $a\sim a'$, and $x,x'\in E_f$ are such that $x\sim x'$, then $df(a)(x)\sim df(a')(x')$.
\end{lemma}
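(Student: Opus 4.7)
The plan is to split the claim into two comparisons: $df(a)(x)\sim df(a')(x)$ and $df(a')(x)\sim df(a')(x')$. The second is immediate from Lemma~\ref{L:flin}: since $df(a')\in\flin(E^*,F^*)$ and $x-x'\in\mu(E)$, we get $df(a')(x)-df(a')(x')=df(a')(x-x')\in\mu(F)$. So the entire weight of the proof lies on the first comparison, $df(a)(x)\sim df(a')(x)$.

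To prove $df(a)(x)\sim df(a')(x)$, I would use the trick of viewing $a'$ itself as an infinitesimal perturbation of $a$ indexed by a hyperlarge $N$. Using Theorem 1.6 of \cite{HM} (as in the proofs of Lemmas \ref{L:flin} and \ref{L:diffcont}), choose $N\in\n^*\setminus\n$ with $y:=N(a'-a)\in\mu(E)$, and set $\delta:=1/N$, so that $\delta\in\mu(\r)$ is positive and $a'=a+\delta y$. Because $\delta\cdot E_f\subseteq\mu(E)$ and $a\in\int(U^*)$, every vector of the form $a+\delta h$ with $h\in E_f$ lies in $U^*$, so the uniform differentiability hypothesis at $a$ applies to the increments $\delta(x+y)$ and $\delta y$, and at $a'$ it applies to $\delta x$. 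Writing
\[
\tfrac{1}{\delta}(f(a'+\delta x)-f(a'))=\tfrac{1}{\delta}\bigl(f(a+\delta(x+y))-f(a)\bigr)-\tfrac{1}{\delta}\bigl(f(a+\delta y)-f(a)\bigr),
\]
the three applications of uniform differentiability at finite points give
\[
df(a')(x)\sim df(a)(x+y)-df(a)(y)=df(a)(x),
\]
where the last equality uses linearity of $df(a)$ in its argument.

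The key point that makes this telescoping work is exactly the $\flin$ hypothesis: since $df(a)\in\flin(E^*,F^*)$ and $y\in\mu(E)$, $df(a)(y)\in\mu(F)$ by Lemma~\ref{L:flin}, so the splitting $df(a)(x+y)=df(a)(x)+df(a)(y)$ produces an $\mu(F)$-error which is absorbed into $\sim$. Combining $df(a)(x)\sim df(a')(x)$ with the observation in the first paragraph yields $df(a)(x)\sim df(a')(x')$, as required.

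The main obstacle, and the only subtle choice, is the simultaneous choice of the increment $\delta$ so that one and the same $\delta$ serves (i) as the scale on which $a'-a$ is promoted from infinitesimal to finite via $y$, and (ii) as the infinitesimal increment in the directional-derivative formulas at both $a$ and $a'$. Once $\delta=1/N$ is fixed with $N(a'-a)\in\mu(E)$, everything else is bookkeeping, and the containment $a+\delta h\in U^*$ for $h\in E_f$ is automatic from $a\in\int(U^*)$.
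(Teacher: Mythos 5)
Your argument is correct and uses the same two-step decomposition as the paper: first compare $df(a)(x)$ with $df(a')(x)$, then compare $df(a')(x)$ with $df(a')(x')$ via Lemma~\ref{L:flin}. The difference is that where the paper simply refers the first comparison to Proposition 2.4 of \cite{S}, you supply a self-contained argument via the hyperlarge-$N$ trick: choose $N\in\n^*\setminus\n$ with $y:=N(a'-a)\in\mu(E)$, set $\delta:=1/N$, and apply the uniform-differentiability approximation three times, at $a$ with increments $x+y$ and $y$, and at $a'$ with increment $x$. The telescoping identity is exact and every use of the hypothesis is legitimate (in particular $y\in\mu(E)\subseteq E_f$, so $x+y$ and $y$ are both admissible directions, and $a+\delta h\in U^*$ for $h\in E_f$ follows from $a\in\int(U^*)$). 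This is essentially the argument one expects Stroyan's Proposition 2.4 to run, so your filled-in proof and the paper's citation are two faces of the same computation.

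One small correction to your closing commentary: the cancellation $df(a)(x+y)-df(a)(y)=df(a)(x)$ is exact linearity of the internal map $df(a)$ and does not rely on $df(a)\in\flin(E^*,F^*)$; you never need to know $df(a)(y)\in\mu(F)$ in that step. The $\flin$ hypothesis is genuinely used only in the second comparison, for replacing $x$ by $x'\sim x$. Also, $y=N(a'-a)$ is still infinitesimal after magnification, not merely finite — the point of Theorem 1.6 of \cite{HM} is exactly that one can magnify an infinitesimal by an infinite factor while staying in $\mu(E)$. Neither slip affects the validity of the proof itself.
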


\begin{proof}
One shows that $df(a)(x)\sim df(a')(x)$ exactly as the proof of Proposition 2.4 in \cite{S}.  Then, since $df(a')\in \flin(E^*,F^*)$, one has $df(a')(x)\sim df(a')(x')$ by Lemma \ref{L:flin}.
\end{proof}

\

\noindent \textbf{Notation:}  In the rest of this paper, for any locally convex space $E$, any internal subspace $Y$ of $E^*$, and any $x\in Y_f$, we may denote the element $x+\mu(Y)$ of $\hat{Y}$ by $\l x \rr$.

\

\noindent For the rest of this subsection, let us assume that $f(U^*\cap E_f)\subseteq F_f$.  (We will take up the issue of when this happens in the next section.)  Since $U$ is open, we can write $$U=\bigcup_{i\in I} \bigcap _{j=1}^{n_i} \{x\in E \ | \ p_{ij}(x-x_{ij})<\epsilon_{ij}\},$$ for some continuous seminorms $p_{ij}$ on $E$, some elements $x_{ij}\in E$ and some $\epsilon_{ij}\in \r^{>0}$.  Let us then define $$\hat{U}:=\bigcup_{i\in I} \bigcap _{j=1}^{n_i} \{ \l x \rr \in \hat{E} \ | \ \hat{p_{ij}}(\l x\rr-\l x_{ij}\rr)<\epsilon_{ij}\}.$$  It is clear that $\hat{U}$ is an open subset of $\hat{E}$ and that if $\l x\rr \in \hat{U}$, then $x\in \int(U^*)\cap E_f$.  If we further assume that $f$ is $S$-continuous at finite points (in particular if $f$ is uniformly differentiable at finite points), then we get a continuous map $\hat{f}:\hat{U}\to \hat{F}$ given by $\hat{f}(\l a\rr)=\l f(a)\rr$.

\

\begin{prop}\label{P:hulldiff}
Suppose $f$ is uniformly differentiable at finite points.  Then $\hat{f}$ is $C^1$.  
\end{prop}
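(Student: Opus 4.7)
The plan is to define, for $\l a \rr \in \hat U$ and $\l h \rr \in \hat E$, the candidate differential
\[
  d\hat f(\l a \rr)(\l h \rr) := \l df(a)(h) \rr,
\]
where I choose a representative $a \in \int(U^*) \cap E_f$ (which exists by the observation preceding the proposition) and any $h \in E_f$. Well-definedness splits cleanly: $df(a)(h) \in F_f$ because $df(a) \in \flin(E^*, F^*)$, and independence from the choice of representatives is exactly Lemma~\ref{L:dfdiffcont}. Linearity of $d\hat f(\l a \rr)$ as a map $\hat E \to \hat F$ is then inherited from linearity of $df(a)$ on $E^*$.

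To verify differentiability of $\hat f$ at $\l a \rr$ with derivative $d\hat f(\l a \rr)$, I would fix standard $\epsilon > 0$ and a standard continuous seminorm $p$ on $F$ (so that $\p$ is a typical seminorm on $\hat F$) and run an overspill argument. The hypothesis of uniform differentiability at the finite point $a$ yields, directly for positive infinitesimal $t$ and via $h \mapsto -h$ for negative infinitesimal $t$, that
\[
  p\Bigl(\tfrac{1}{t}\bigl(f(a + t h) - f(a)\bigr) - df(a)(h)\Bigr) \in \mu(\r).
\]
Hence the internal set of $t$ with $0 < |t|$ and this seminorm bounded by $\epsilon$ contains every nonzero infinitesimal, and overspill produces a noninfinitesimal $\delta_0 > 0$ such that the bound holds on $(-\delta_0, \delta_0) \setminus \{0\}$. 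Any standard $\delta < \delta_0$ then witnesses differentiability in the $\p$-seminorm on $\hat F$.

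For continuity of $d\hat f \colon \hat U \times \hat E \to \hat F$ at a point $(\l a_0 \rr, \l h_0 \rr)$, I would argue by contradiction. If no basic open neighborhood determined by a finite family $S$ of continuous seminorms on $E$ and a radius $\delta > 0$ forces
\[
  \p\bigl(d\hat f(\l a \rr)(\l h \rr) - d\hat f(\l a_0 \rr)(\l h_0 \rr)\bigr) < \epsilon,
\]
then for every such $(S, \delta)$ the internal set of pairs $(a, h) \in E^* \times E^*$ with $q(a - a_0), q(h - h_0) < \delta$ for all $q \in S$ and $p(df(a)(h) - df(a_0)(h_0)) \geq \epsilon/2$ is nonempty. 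Sufficient saturation produces a common $(a, h)$ with $a \sim a_0$ and $h \sim h_0$ (so $a \in \int(U^*) \cap E_f$ and $h \in E_f$ automatically); but then Lemma~\ref{L:dfdiffcont} delivers $df(a)(h) \sim df(a_0)(h_0)$, contradicting the lower bound on $p$.

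The main obstacle I anticipate is the bookkeeping in the continuity step: the S-continuity of $df$ supplied by Lemma~\ref{L:dfdiffcont} is phrased purely monadically in the original space, while the required conclusion is a genuine $\epsilon$--$\delta$ statement in the locally convex topology of the hull $\hat E$, and bridging the two forces one to invoke saturation with respect to the (potentially uncountable) family of continuous seminorms on $E$. Everything else reduces to fairly routine overspill once the derivative has been pushed down to the hull.
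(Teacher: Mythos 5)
Your proposal is correct and follows essentially the same route as the paper: define $d\hat f(\l a\rr)(\l h\rr)=\l df(a)(h)\rr$ (well-defined by Lemma~\ref{L:dfdiffcont} and the fact that $df(a)\in\flin(E^*,F^*)$), establish differentiability by noting the defect is infinitesimal for infinitesimal $t$ and then applying overspill/saturation, and prove continuity of $d\hat f$ by contradiction via saturation against the S-continuity of $df$ at finite points. Your explicit treatment of negative $t$ via $h\mapsto -h$ is a small detail the paper leaves implicit, but it is not a different argument.
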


\begin{proof}

\noindent By Lemma \ref{L:dfdiffcont}, we get a map $\hat{df}:\hat{U}\times \hat{E}\to \hat{F}$ given by $\hat{df}(\l a\rr,\l h\rr)=\l df(a)(h)\rr$.  We now show, for $\l a\rr \in \hat{U}$, that $\hat{df}(\l a\rr)$ is the derivative of $\hat{f}$ at $\l a\rr$.  In order to do this, let $\l h\rr\in \hat{E}$, $\hat{p}$ a continuous seminorm on $\hat{E}$ and $\epsilon\in \r^{>0}$.  We need a $\delta\in \r^{>0}$ so that if $|t|<\delta$, then $$\hat{p}(\l df(a)(h)\rr-\frac{1}{t}(\hat{f}(\l a\rr+t\l h\rr)-\hat{f}(\l a\rr)))<\epsilon,$$ i.e. we need a $\delta\in \r^{>0}$ so that if $|t|<\delta$, then 
$$st(p(df(a)(h)-\frac{1}{t}(f(a+th)-f(a))))<\epsilon.$$  Since the above expression is $0$ if $t$ is infinitesimal (by uniform differentiability at finite points), we can find the desired $\delta$ by saturation.

\

\noindent It remains to show $\hat{df}:\hat{U}\times \hat{E}\to \hat{F}$ is continuous.  Fix $[a]\in \hat{U}$ and $[h]\in \hat{E}$.  Let $\hat{p}$ be a continuous seminorm on $\hat{F}$ and $\epsilon\in \r^{>0}$.  We need $r\in \r^{>0}$ and continuous seminorms $\hat{p_1},\ldots,\hat{p_n}$ on $\hat{E}$ so that if $\hat{p_i}([a]-[a']),\hat{p_i}([h]-[h'])<r$ for $i=1,\ldots,n$, then $\hat{p}(\hat{df}([a])([h])-\hat{df}([a'])([h']))<\epsilon$.  If not, then one can use saturation to get $a'\in U^*$ with $a'\sim a$ and $h'\in E_f$ with $h'\sim h$ such that $p(df(a)(h)-df(a')(h'))\geq \epsilon$, which contradicts the S-continuity of $df$ at finite points.
\end{proof}

\

\begin{df}
The notion $f$ is \textbf{uniformly $C^k$ at finite points} is defined recursively as follows.  $f$ is uniformly $C^1$ at finite points means $f$ is uniformly differentiable at finite points.  Suppose $f$ is uniformly $C^k$ at finite points.  Then we say $f$ is uniformly $C^{k+1}$ at finite points if there is a map $d^{k+1}f:U\to \lin^{k+1}(E,F)$ so that whenever $a\in \int(U^*)\cap E_f$, we have $d^{k+1}f(a)\in \flin^{k+1}(E^*,F^*)$ and whenever $x\in E_f$, $h\in (E_f)^k$, and $\delta\in \mu(\r)$ is positive, we have
$$\frac{1}{\delta}(d^kf(a+\delta x)(h)-d^kf(a)(h))\sim d^{k+1}f(a)(h,x).$$  We will say that $f$ is \textbf{uniformly smooth at finite points} if $f$ is uniformly $C^k$ at finite points for every $k\geq 1$.
\end{df}

\

\begin{prop}\label{L:hullsmooth}
If $f$ is uniformly $C^k$ at finite points, then $\hat{f}$ is $C^k$ and $$d^{k}\hat{f}(\l a\rr)(\l h_1\rr,\ldots,\l h_k\rr)=\l d^kf(a)(h_1,\ldots,h_k)\rr.$$  In particular, if $f$ is uniformly smooth at finite points, then $\hat{f}$ is smooth.
\end{prop}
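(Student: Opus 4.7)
The plan is to proceed by induction on $k$, with the base case $k=1$ given by Proposition \ref{P:hulldiff}.

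For the inductive step, assume the proposition for some $k\geq 1$ and suppose $f$ is uniformly $C^{k+1}$ at finite points. The first and most delicate step is to prove a higher-order analog of Lemma \ref{L:dfdiffcont}: that $d^{k+1}f$ is S-continuous at finite points, in the sense that if $a,a'\in \int(U^*)\cap E_f$ with $a\sim a'$ and $h_i,h_i'\in E_f$ with $h_i\sim h_i'$ for each $i=1,\ldots,k+1$, then $d^{k+1}f(a)(h_1,\ldots,h_{k+1})\sim d^{k+1}f(a')(h_1',\ldots,h_{k+1}')$. I would prove this in the spirit of Lemma \ref{L:dfdiffcont}: for the base-point perturbation, choose $N\in \n^*\setminus\n$ with $N(a'-a)\in \mu(E)$, set $\delta:=1/N$ and $x:=N(a'-a)\in E_f$, and invoke the uniform $C^{k+1}$ condition to write
$$d^kf(a')(h)=d^kf(a)(h)+\delta\bigl(d^{k+1}f(a)(h,x)+\eta\bigr)$$
for some $\eta\in\mu(F)$; since $d^{k+1}f(a)(h,x)\in F_f$ by Lemma \ref{L:flinhigherorder}, the correction term is infinitesimal. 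To then pass from $(a',h_1,\ldots,h_{k+1})$ to $(a',h_1',\ldots,h_{k+1}')$ one applies Lemma \ref{L:flinhigherorder} to $d^{k+1}f(a')\in\flin^{k+1}(E^*,F^*)$ one direction at a time.

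This S-continuity yields a well-defined continuous map $\widehat{d^{k+1}f}:\hat{U}\times \hat{E}^{k+1}\to\hat{F}$ sending $(\llbracket a\rrbracket,\llbracket h_1\rrbracket,\ldots,\llbracket h_{k+1}\rrbracket)$ to $\llbracket d^{k+1}f(a)(h_1,\ldots,h_{k+1})\rrbracket$, and it remains to identify this map with $d^{k+1}\hat{f}$. Using the inductive hypothesis to rewrite $d^k\hat{f}(\llbracket a\rrbracket)(\llbracket h\rrbracket)=\llbracket d^kf(a)(h)\rrbracket$, this reduces to verifying, for fixed $\llbracket h_1\rrbracket,\ldots,\llbracket h_k\rrbracket,\llbracket x\rrbracket\in\hat{E}$ and $\llbracket a\rrbracket\in\hat{U}$, that
$$\lim_{t\to 0}\frac{1}{t}\bigl(\llbracket d^kf(a+tx)(h)\rrbracket-\llbracket d^kf(a)(h)\rrbracket\bigr)=\llbracket d^{k+1}f(a)(h,x)\rrbracket,$$
which follows by the same saturation argument as in Proposition \ref{P:hulldiff}: the uniform $C^{k+1}$ condition at finite points forces the $\hat{p}$-distance between the difference quotient and $d^{k+1}f(a)(h,x)$ to be infinitesimal whenever $t$ is, and saturation converts this into a standard neighborhood estimate. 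Continuity of $d^{k+1}\hat{f}$ as a map $\hat{U}\times \hat{E}^{k+1}\to \hat{F}$ is then immediate from the S-continuity established in the first step.

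The main obstacle will be the higher-order S-continuity statement, since it requires simultaneously controlling perturbations of the base point and of all $k+1$ direction arguments; the former uses the defining property of uniformly $C^{k+1}$ at finite points in conjunction with the $\flin^{k+1}$ hypothesis, while the latter leans on Lemma \ref{L:flinhigherorder}. Beyond this, the remainder of the argument is essentially a careful reprise of the proofs of Proposition \ref{P:hulldiff} and Lemma \ref{L:dfdiffcont}.
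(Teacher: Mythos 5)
Your overall architecture is exactly the paper's: induction on $k$ with Proposition \ref{P:hulldiff} as the base case, well-definedness of $d^{k+1}\hat{f}$ via S-continuity of $d^{k+1}f$ at finite points (handling the direction slots with Lemma \ref{L:flinhigherorder}), the limit identity via saturation, and continuity exactly as in Proposition \ref{P:hulldiff}. The paper disposes of the base-point perturbation by citing the higher-order analog of Proposition 2.4 of \cite{S}, which is precisely the statement you set out to prove explicitly.

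However, the computation you display for that step proves the wrong statement. The identity
$$d^kf(a')(h)=d^kf(a)(h)+\delta\bigl(d^{k+1}f(a)(h,x)+\eta\bigr), \qquad x=N(a'-a)\in\mu(E),$$
shows that $d^kf(a')(h)\sim d^kf(a)(h)$; it is the analog of Lemma \ref{L:diffcont} (S-continuity of the \emph{function}) one level up, not the analog of Lemma \ref{L:dfdiffcont} (S-continuity of the \emph{derivative}) that well-definedness and continuity of $d^{k+1}\hat{f}$ actually require, namely $d^{k+1}f(a)(h,h_{k+1})\sim d^{k+1}f(a')(h,h_{k+1})$. Knowing that $d^kf(a')(h)-d^kf(a)(h)$ is infinitesimal says nothing directly about the next derivative. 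The missing piece is the Proposition 2.4--style comparison of difference quotients: with $\delta=1/N$ one has $a+\delta(h_{k+1}+x)=a'+\delta h_{k+1}$, so the uniform $C^{k+1}$ condition in the finite direction $h_{k+1}+x$ gives $\frac{1}{\delta}(d^kf(a'+\delta h_{k+1})(h)-d^kf(a)(h))\sim d^{k+1}f(a)(h,h_{k+1}+x)\sim d^{k+1}f(a)(h,h_{k+1})$, the last step by multilinearity and Lemma \ref{L:flinhigherorder}; your displayed identity supplies the second ingredient, $\frac{1}{\delta}(d^kf(a')(h)-d^kf(a)(h))\sim d^{k+1}f(a)(h,x)\in\mu(F)$; subtracting and invoking uniform $C^{k+1}$-ness at $a'$ identifies the result with $d^{k+1}f(a')(h,h_{k+1})$. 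So your display is one of the two halves of the argument rather than the whole of it. With that repaired, the remainder of your proof coincides with the paper's.
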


\

\begin{proof}
We prove this by induction on $k$.  The case $k=1$ is exactly Proposition \ref{P:hulldiff} (and its proof).  We now suppose that $f$ is uniformly $C^{k+1}$.  Fix $\l a\rr\in \hat{U}$ and $\l h_1\rr,\ldots,\l h_{k+1}\rr\in \hat{E}$.  We must show that $$d^{k+1}\hat{f}(\l a\rr)(\l h_1\rr,\ldots,\l h_k\rr)=\l d^{k+1}f(a)(h_1,\ldots,h_{k+1})\rr.$$  We first must show that the above expression is well-defined.  So suppose $a'\sim a$ and $h_i'\sim h_i$ for $i=1,\ldots,k+1$.  By the analog of \cite{S}, Proposition 3.2, we know that $d^{k+1}f(a)(h_1,\ldots,h_{k+1})\sim d^{k+1}(a')(h_1,\ldots,h_{k+1})$.  But since $df(a')\in \flin^{k+1}(E^*,F^*)$, Lemma \ref{L:flinhigherorder} shows that $$df(a')(h_1,\ldots,h_{k+1})\sim df(a')(h_1',\ldots,h_{k+1}').$$

\

\noindent  For ease of notation, let $h=(h_1,\ldots,h_k)$ and $d^k_h\hat{f}(\cdot):=d^k\hat{f}(\cdot)(h_1,\ldots,h_k)$.  We now must show that $$\lim_{t\to 0} \frac{1}{t}(d^k_h\hat{f}(\l a\rr+t\l h_{k+1}]\rr-d^k_h\hat{f}(\l a\rr))=\l d^{k+1}f(a)(h_1,\ldots,h_{k+1})\rr.$$  By induction, this amounts to showing that 
$$\lim_{t\to 0} \frac{1}{t}\l d^kf(a+th_{k+1})(h)-d^kf(a)(h)\rr=\l d^{k+1}f(a)(h_1,\ldots,h_{k+1})\rr.$$
Let $\hat{p}$ be a continuous seminorm on $\hat{F}$ and let $\epsilon\in \r^{>0}$.  We need a $\delta\in\r^{>0}$ so that if $|t|<\delta$, then $$\st(p(\frac{1}{t}(d^kf(a+th_{k+1})-d^kf(a)(h))-d^{k+1}f(a)(h_1,\ldots,h_{k+1})))<\epsilon.$$  Since the above quantity is $0$ for infinitesimal $t$, the desired $\delta$ can be obtained by saturation.

\

\noindent Our final obligation is to show that $d^{k+1}\hat{f}:\hat{U}\times \hat{E}^{k+1}\to \hat{F}$ is continuous.  The proof is identical to the corresponding part of the proof of Proposition \ref{P:hulldiff}.
\end{proof}

\

\noindent \textbf{Strong Smoothness}

\

\noindent We now introduce a standard condition on $f$ which implies that it is uniformly differentiable at finite points.  We first need to mention some facts from the calculus of locally convex spaces.  Suppose $f$ is $C^1$.  Let $$\u:=\{(x,y,t)\in U\times E \times \r \ | \ x+ty\in U\},$$ an open subset of $E\times E\times \r$.  Let $\f:\u\to F$ be defined by 

\[
	\f(x,y,t)=
	\begin{cases}
	\frac{1}{t}(f(x+ty)-f(x))	&\text{if $t\not=0$}\\
	df(x)(y) 	&\text{if $t=0$}
	\end{cases}
\]

\noindent It follows from the Mean Value Theorem that $\f$ is continuous.  In fact, it is shown in \cite{BGN} that if $f$ is continuous and there exists a continuous function $\f:\u\to F$ such that $\f(x,y,t)=\frac{1}{t}(f(x+ty)-f(x))$ for $t\not=0$, then $f$ is $C^1$ and $df(x)(y)=\f(x,y,0)$.  

\

\begin{df}
Suppose $f$ is continuous.  Let us say that $f$ is \textbf{strongly $C^1$} if $\f$ is uniformly continuous.
\end{df}

\

\begin{lemma}\label{L:strongfinite}
Suppose $f$ is strongly $C^1$.  Then $f$ is uniformly differentiable at finite points.  
\end{lemma}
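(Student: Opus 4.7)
The plan is to invoke the nonstandard characterization of uniform continuity on (subspaces of) locally convex spaces: a uniformly continuous map $g:V\to G$ satisfies $g(u)\sim g(v)$ for \emph{all} $u,v\in V^*$ with $u\sim v$, not merely for near-standard $u,v$. Specialising this to $\f:\u\to F$ and using the product uniformity on $E\times E\times \r$, the workhorse I will rely on is: for all $(x,y,t),(x',y',t')\in \u^*$ with $x\sim x'$, $y\sim y'$, and $t\sim t'$, one has $\f(x,y,t)\sim \f(x',y',t')$.

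Given this, the quotient condition is immediate. Fix $a\in \int(U^*)\cap E_f$, $h\in E_f$, and a positive $\delta\in\mu(\r)$. Both $(a,h,0)$ and $(a,h,\delta)$ lie in $\u^*$: the former because $a+0\cdot h=a\in U^*$, and the latter because $a\in \int(U^*)$ together with $\delta h\in \mu(E)$ gives $a+\delta h\in U^*$. These triples differ only in their last coordinate, by the infinitesimal $\delta$, so uniform continuity of $\f$ yields
$$df(a)(h)=\f(a,h,0)\sim \f(a,h,\delta)=\frac{1}{\delta}(f(a+\delta h)-f(a)),$$
which is exactly the desired relation.

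It remains to show $df(a)\in \flin(E^*,F^*)$ for every $a\in \int(U^*)\cap E_f$. By Lemma \ref{L:flin} it is enough to verify $df(a)(\mu(E))\subseteq \mu(F)$. Given $h\in \mu(E)$, the points $(a,0,0)$ and $(a,h,0)$ both lie in $\u^*$ (each membership reduces to $a\in U^*$) and differ by the infinitesimal $(0,h,0)$, so uniform continuity of $\f$ gives
$$df(a)(h)=\f(a,h,0)\sim \f(a,0,0)=df(a)(0)=0.$$

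The only genuinely nontrivial step is translating the uniform continuity of $\f$ on the subset $\u\subseteq E\times E\times \r$ into its nonstandard form valid on all of $\u^*$ (rather than only near-standard points) and confirming that each triple appearing above actually lies in $\u^*$; this last check is exactly what forces the hypothesis $a\in \int(U^*)$ into the definition of ``uniformly differentiable at finite points.''
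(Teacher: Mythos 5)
Your argument is correct and follows essentially the same route as the paper: both proofs use the nonstandard form of uniform continuity of $\f$ to get $\f(a,h,0)\sim\f(a,0,0)=0$ for $h\in\mu(E)$ (giving $df(a)\in\flin(E^*,F^*)$ via Lemma \ref{L:flin}) and $\f(a,h,\delta)\sim\f(a,h,0)=df(a)(h)$ for infinitesimal $\delta$. Your additional care in verifying that the relevant triples lie in $\u^*$ is a detail the paper leaves implicit, but it is the same proof.
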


\

\begin{proof}
Suppose $a\in \int(U^*)\cap E_f$.  We first show $df(a)\in \flin(E^*,F^*)$.  It suffices to show that if $x\in \mu(E)$, then $df(a)(x)\in \mu(F)$.  But $$df(a)(x)= \f(a,x,0)\sim \f(a,0,0) =df(a)(0)=0$$ since $df(a)$ is an internal linear map.

\

\noindent Now suppose $x\in E_f$ and $\delta$ is a positive element of $\mu(\r)$.  We must show that $\f(a,x,\delta)\sim df(x)(y)$.  But $\f(a,x,\delta)\sim \f(a,x,0)=df(a)(x)$, finishing the proof.
\end{proof}

\

\begin{rmk} 
Notice that we didn't ever use the fact that $a$ and $x$ were finite in the above proof, so being strongly $C^1$ implies uniform differentiability at all points and where we are allowed to take derivatives in the direction of any element of $E^*$.
\end{rmk}

\

\begin{lemma}\label{L:strongunif}
If $f$ is strongly $C^1$, then $f$ is uniformly continuous.
\end{lemma}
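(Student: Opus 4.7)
The plan is to argue via the nonstandard characterization of uniform continuity: a map $g$ whose domain is a subset $A$ of a locally convex space $E$ is uniformly continuous into a locally convex space $F$ if and only if, for all $u,v\in A^*$ with $u-v\in\mu(E)$, one has $g(u)-g(v)\in\mu(F)$. Accordingly, fix $a,b\in U^*$ with $a\sim b$; the goal is to show $f(a)\sim f(b)$.

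The key algebraic observation is that the defining formula for $\f$ with $t=1$ reads $\f(x,y,1)=f(x+y)-f(x)$ whenever $(x,y,1)\in\u$. Taking $x=a$ and $y=b-a$ gives
\[
\f(a,b-a,1)=f(b)-f(a),
\]
while taking $x=a$ and $y=0$ gives $\f(a,0,1)=0$. Both triples $(a,b-a,1)$ and $(a,0,1)$ lie in $\u^*$, since $a+1\cdot(b-a)=b\in U^*$ and $a+1\cdot 0=a\in U^*$. Because $b-a\in\mu(E)$, these two triples differ by an infinitesimal element of $(E\times E\times\r)^*$.

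The hypothesis that $\f$ is uniformly continuous on $\u$ then forces $\f(a,b-a,1)\sim\f(a,0,1)$, i.e.\ $f(b)-f(a)\sim 0$, which is exactly what was required. There is no serious obstacle here; the content of the argument is the recognition that the uniform continuity of $\f$ can be profitably applied to pairs of the form $\bigl((a,b-a,1),(a,0,1)\bigr)$, reducing the uniform continuity of $f$ to the comparison of $\f$ with its (vanishing) value at the adjacent point whose second coordinate is $0$.
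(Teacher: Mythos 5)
Your argument is correct, and it is genuinely different from (and somewhat slicker than) the paper's proof. The paper also starts from the nonstandard characterization of uniform continuity with $x\sim y$ in $U^*$, but it picks an infinite $N$ with $z:=N(x-y)\in\mu(E)$, sets $\delta:=1/N$, and writes $f(x)-f(y)=\delta\,\f(y,z,\delta)$; it then uses uniform continuity of $\f$ to compare $\f(y,z,\delta)$ with $\f(y,z,0)=df(y)(z)$, and finally needs the observation (from the proof of Lemma \ref{L:strongfinite} and the remark following it) that $df(y)(\delta z)=df(y)(x-y)\in\mu(F)$. Your route instead applies the uniform continuity of $\f$ at the parameter value $t=1$, comparing the triples $(a,b-a,1)$ and $(a,0,1)$, which indeed both lie in $\u^*$ and differ by the infinitesimal $(0,b-a,0)$; since $\f(a,b-a,1)=f(b)-f(a)$ and $\f(a,0,1)=0$, the conclusion is immediate. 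This buys you a proof that never mentions the derivative, the choice of an infinite $N$, or the fact that $df(y)\in\flin(E^*,F^*)$ — it uses only the restriction of $\f$ to $t\neq 0$ and so would apply verbatim to any continuous $\f$ extending the difference quotients, whereas the paper's version has the mild advantage of exhibiting the first-order estimate $f(x)-f(y)\sim df(y)(x-y)$ along the way.
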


\begin{proof}
Suppose $x,y\in U^*$ are such that $x\sim y$.  Choose $N\in \n^*\setminus \n$ such that $z:=N(x-y)\in \mu(E)$.  Let $\delta:=\frac{1}{N}$.  Then $$f(x)-f(y)=f(y+\delta z)-f(y)\sim \delta df(y)(z)=df(y)(\delta z)\in\mu(F).$$ 
\end{proof}

\

\noindent We now will describe the higher order analogs of this notion.  One can recursively define the sets $\uu$ for $k\in \n$ by $\uuu:=(\uu)^{[1]}$ and the functions $\ff:\uu \to F$ by $\fff:=(\ff)^{[1]}$.  It is shown in \cite{BGN} that a a $C^k$ function $f$ is $C^{k+1}$ if and only if $f$ is $C^1$ and $\ff$ is $C^1$.  

\

\begin{df}
The notion $f$ is \textbf{strongly $C^k$} is defined recursively as follows.  The notion $f$ is strongly $C^1$ has already been defined.  Assume $f$ is strongly $C^k$.  Then $f$ is strongly $C^{k+1}$ if $\ff$ is strongly $C^1$.  We will say that $f$ is \textbf{strongly smooth} if $f$ is strongly $C^k$ for all $k\geq 1$.
\end{df}

\

\begin{lemma}\label{L:strongunifk}
If $f$ is strongly $C^k$, then $f$ is uniformly $C^k$ at finite points.
\end{lemma}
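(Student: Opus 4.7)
The plan is to induct on $k\geq 1$. The base case $k=1$ is exactly Lemma~\ref{L:strongfinite}. For the inductive step, assume the result for $k$ and suppose $f$ is strongly $C^{k+1}$. By the recursive definition, this entails that $f$ is itself strongly $C^k$, so by induction $f$ is uniformly $C^k$ at finite points and the iterated derivatives $d^j f$ for $1\leq j\leq k$ are in hand, satisfying both the $\flin^j$-condition at finite points and the infinitesimal difference-quotient identities at each level. The additional hypothesis is that $\ff$ is strongly $C^1$.

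The key idea is to apply the base case (Lemma~\ref{L:strongfinite}) directly to $\ff$, viewed as a function from the open set $\uu$ sitting inside the iterated product locally convex space built from copies of $E$ and $\r$, equipped with the natural product family of seminorms. The conclusion is that $\ff$ is uniformly differentiable at finite points, so $d\ff$ exists and satisfies the $\flin$-condition plus the infinitesimal difference-quotient identity of Stroyan. I would then extract the candidate $d^{k+1}f:U\to\lin^{k+1}(E,F)$ from $d\ff$ by restricting to the affine subspace on which all of the ``step-size'' coordinates vanish: along such configurations $\ff$ coincides (by unwinding the definition) with the iterated directional derivative $d^k f$ applied to the remaining coordinates, and differentiating in a fresh direction produces exactly $d^{k+1}f$.

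Verifying the two requirements of being uniformly $C^{k+1}$ at finite points then reduces to transcription. For the first, that $d^{k+1}f(a)\in\flin^{k+1}(E^*,F^*)$ whenever $a\in\int(U^*)\cap E_f$, it follows from $d\ff$ lying in $\flin$ at the corresponding point of $\uu^*$ together with Lemma~\ref{L:flinhigherorder}. For the second, the identity
\[
\tfrac{1}{\delta}\bigl(d^k f(a+\delta x)(h)-d^k f(a)(h)\bigr)\sim d^{k+1}f(a)(h,x)
\]
is exactly the statement that $\ff$ is uniformly differentiable at the point $(a,h_1,\ldots,h_k,0,\ldots,0)\in\int((\uu)^*)\cap(\text{finite points})$ in the direction $(x,0,\ldots,0)$ with infinitesimal step $\delta$, since $\ff$ at such ``zero step-size'' configurations equals $d^k f(\cdot)(h)$.

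The main obstacle is the bookkeeping around the recursive definitions of $\uu$ and $\ff$: one must carefully identify which of the many coordinates of $\uu$ play the roles of base point, directions, and step parameters, and check that the finiteness and infinitesimality conditions defining ``uniformly $C^{k+1}$ at finite points'' for $f$ correspond precisely to those of ``uniformly differentiable at finite points'' for $\ff$ under the product locally convex structure (in particular that finite points of $\uu^*$ in the product seminorm family match tuples of finite points of $E^*$ with finite scalar step-sizes). Once this correspondence is spelled out, the result reduces to applying the base case to $\ff$ and reading off $d^{k+1}f$.
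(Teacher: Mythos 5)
Your proposal follows essentially the same route as the paper: both arguments rest on the fact that $d^{k+1}f$ is a partial map of $f^{[k+1]}$ (i.e.\ is obtained from $\fff=(\ff)^{[1]}$ by fixing the step-size coordinates at $0$), and both exploit the strong $C^1$ property of $\ff$ to pass from the difference quotient of $d^kf$ to $d^{k+1}f$; the paper phrases this directly as uniform continuity of $f^{[k+1]}$ evaluated at $((a,h_1,0,\ldots),(x,0,\ldots,0),\delta)$, while you route it through applying Lemma~\ref{L:strongfinite} to $\ff$ on the product space, which is the same mechanism. One justification in your write-up is incomplete: to verify $d^{k+1}f(a)\in\flin^{k+1}(E^*,F^*)$ via Lemma~\ref{L:flinhigherorder}, the case where the infinitesimal argument is the last one, $h_{k+1}=x$, does follow from $d\ff(P)\in\flin$, since $x$ appears as an argument of the linear map $d\ff(P)$; but when the infinitesimal argument is one of $h_1,\ldots,h_k$, it sits inside the \emph{base point} $P=(a,h_1,0,\ldots,h_k,0,\ldots)$ rather than in the direction vector, so the $\flin$ property of $d\ff(P)$ for fixed $P$ says nothing. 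You need in addition the S-continuity of $d\ff$ in the base point (Lemma~\ref{L:dfdiffcont} applied to $\ff$) to replace the infinitesimal $h_i$ by $0$ and then conclude by multilinearity of $d^{k+1}f(a)$ --- or, as the paper does, simply invoke uniform continuity of $d^{k+1}f$ as a partial map of the uniformly continuous $f^{[k+1]}$. This is easily repaired with tools already in the paper, but as stated that step does not go through.
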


\begin{proof}
The proof goes by induction on $k$.  The case $k=1$ is precisely Lemma \ref{L:strongfinite}.  We now assume the result holds for $k$ and we suppose $f$ is strongly $C^{k+1}$.  The induction hypothesis gives us that $f$ is uniformly $C^k$ at finite points.  In order to prove the other two conditions for $f$ to be uniformly $C^{k+1}$ at finite points, we need to elaborate on the relationship between the functions $d^nf$ and $f^{[n]}$ for arbitrary $n$.

\

\noindent Using the terminology from \cite{BGN}, each $d^nf$ is a \textit{partial map} of $f^{[n]}$ in the sense that each $d^nf$ is obtained from $f^{[n]}$ by fixing some coordinates of the domain of $f^{[n]}$.  For example, $$df(x)(h)=\f(x,h,0)$$ and $$d^2f(x)(h_1,h_2)=f^{[2]}(x,h_1,0,h_2,0,0,0).$$  Hence, if $f^{[n]}$ is uniformly continuous, then so is $d^nf$.

\

\noindent Let us show that for any $a\in U^*\cap E_f$ such that $\mu(a)\subseteq U^*$, we have $df(a)\in \flin^{k+1}(E,F).$  Suppose $h_1,\ldots,h_{k+1}\in E_f$ and, without loss of generality, that $h_1\in \mu(E)$.  Then by uniform continuity of $d^{k+1}f$, we have $d^{k+1}f(a)(h_1,\ldots,h_{k+1})\sim d^{k+1}f(a)(0,h_2.\ldots,h_{k+1})=0$.  Now suppose that $h_1,\ldots,h_k,x\in E_f$ and $\delta$ is a positive element of $\mu(\r)$.  We now show that $$\frac{1}{\delta}(d^kf(a+\delta x)(h_1,\ldots,h_k)-d^kf(a)(h_1,\ldots,h_k))\sim d^{k+1}f(a)(h_1,\ldots,h_k,x).$$  This follows from the uniform continuity of $f^{[k+1]}$.  Let us illustrate this in the case when $k=2$, as the formula relating $d^kf$ and $\ff$ is simple enough in this case.  For simplicity, let us denote the left hand side of the above equation by LHS.

\begin{alignat}{2}
\text{LHS} &=\frac{1}{\delta}(f^{[2]}(a+\delta x,h_1,0,h_2,0,0,0)-f^{[2]}(a,h_1,0,h_2,0,0,0)) \notag \\
         &=f^{[3]}((a,h_1,0,h_2,0,0,0),(x,0,0,0,0,0,0),\delta) \notag \\
         &\sim f^{[3]}((a,h_1,0,h_2,0,0,0),(x,0,0,0,0,0,0),0) \notag \\
         &=d^3f(a)(h_1,h_2,x) \notag \qedhere
\end{alignat}
\end{proof}

\

\noindent \textbf{The Case of Complete (HM)-spaces}

\

\noindent Recall that $x\in E^*$ is called \textit{pre-nearstandard} if for every neighborhood $V$ of $0$ in $E$, there is $y\in E$ such that $x-y\in V^*$.  Let $E_{\pns}$ denote the set of pre-nearstandard points of $E$ and note that we always have the inclusions $E_{\ns}\subseteq E_{\pns}\subseteq E_f$.  The importance of the pre-nearstandard points of $E$ is that their image in $\hat{E}$ is the completion of $E$ in $\hat{E}$ (so $E$ is complete if and only if $E_{\ns}=E_{\pns}$).  An \textbf{(HM)-space} is a locally convex space $E$ for which $E_{\pns}=E_f$.

\

\begin{rmks}

\
\begin{enumerate}
\item In standard language, a locally convex space $E$ is an (HM)-space if and only if whenever $\mathcal{F}$ is an ultrafilter on $E$ with the property that for every $U$ from a fixed neighborhood base of $0$ in $E$ there is $n$ such that $nU\in \mathcal{F}$, then $\mathcal{F}$ is a Cauchy filter.
\item For metrizable $E$, $E$ is an (HM)-space if and only if every bounded set is totally bounded. 
\item Examples of (HM)-spaces include the finite-dimensional spaces, (FM)-spaces, nuclear spaces, and Schwarz spaces.
\item $E$ is a \textit{complete} (HM)-space if and only if $E_{\ns}=E_f$.  The space $\r^\n$ from Example \ref{L:stroyanex} is a complete (HM)-space.
\end{enumerate}
\end{rmks}

\noindent The proofs of the above remarks can be found in \cite{HM} and \cite{HM2}.

\
 
\begin{lemma}\label{L:HMsmooth}
Suppose that $E$ is a complete (HM)-space and $f$ is smooth.  Then $f$ is uniformly smooth at finite points.
\end{lemma}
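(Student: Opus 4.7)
The plan is to exploit the defining property of a complete (HM)-space, namely the identity $E_f = E_{\ns}$: every finite point of $E^*$ is infinitely close to a standard point of $E$. Since $f$ is smooth, the maps $d^k f$ and $\ff$ are continuous for every $k$, and $E_f = E_{\ns}$ upgrades continuity at standard points to the S-continuity at finite points that appears in the definition of being uniformly $C^k$ at finite points. We induct on $k$.

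For $k=1$, fix $a \in \int(U^*) \cap E_f$ and use $E_f = E_{\ns}$ to pick a standard $a_0 \in E$ with $a \sim a_0$; since $a_0 \in \mu(a) \subseteq U^*$ and $a_0$ is standard, $a_0 \in U$. To verify $df(a) \in \flin(E^*, F^*)$, Lemma \ref{L:flin} reduces matters to showing $df(a)(x) \in \mu(F)$ for every $x \in \mu(E)$; since $df \colon U \times E \to F$ is continuous and $(a, x) \sim (a_0, 0)$, we get $df(a)(x) \sim df(a_0)(0) = 0$. For the infinitesimal difference quotient, pick $h \in E_f$ with $h \sim h_0 \in E$ and a positive $\delta \in \mu(\r)$, and use the continuous extension $\f \colon \u \to F$ introduced in the paper:
\[
\frac{1}{\delta}(f(a+\delta h) - f(a)) = \f(a, h, \delta) \sim \f(a_0, h_0, 0) = df(a_0)(h_0) \sim df(a)(h),
\]
where the first $\sim$ uses continuity of $\f$ at $(a_0, h_0, 0) \in \u$ and the last uses continuity of $df$ at $(a_0, h_0)$.

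For the inductive step, assume $f$ is uniformly $C^k$ at finite points and fix $a \in \int(U^*) \cap E_f$. To check $d^{k+1}f(a) \in \flin^{k+1}(E^*, F^*)$, Lemma \ref{L:flinhigherorder} lets us reduce to the case where some $h_i \in \mu(E)$; then its standard part is $0$, and continuity of the multilinear $d^{k+1}f$ at the standard tuple of standard parts gives $d^{k+1}f(a)(h_1,\ldots,h_{k+1}) \sim 0$. For the difference-quotient relation, one writes both sides as evaluations of $\fff$, exactly as in the final computation of the proof of Lemma \ref{L:strongunifk}: the quotient $\frac{1}{\delta}(d^k f(a+\delta x)(h) - d^k f(a)(h))$ becomes an $\fff$-value whose step-coordinate is $\delta$, while $d^{k+1}f(a)(h, x)$ is the same evaluation with step-coordinate $0$. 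Continuity of $\fff$ at the underlying standard tuple in $\uuu$, whose existence is again supplied by $E_f = E_{\ns}$ applied coordinatewise, yields the required $\sim$.

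The main obstacle is the coordinate bookkeeping of the inductive step: one has to identify precisely which partial map of $\fff$ realizes the difference quotient of $d^k f$, and then check that the standard-part tuple at which continuity is invoked actually lies in $\uuu$. Once this is laid out following the template in the $k=2$ computation at the end of the proof of Lemma \ref{L:strongunifk}, the argument is uniform in $k$, and the only nonstandard input used throughout is the single identity $E_f = E_{\ns}$ supplied by completeness of $E$ as an (HM)-space.
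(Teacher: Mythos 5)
Your proof is correct and follows essentially the same route as the paper's: completeness plus the (HM)-property gives $E_f = E_{\ns}$, and then continuity of $\f$ (resp.\ $\fff$ and $d^{k+1}f$) at the standard parts yields both the finiteness of the derivatives and the infinitesimal difference-quotient relations. The paper only writes out the $k=1$ case and asserts the higher-order cases are analogous, whereas you spell out the induction; the one cosmetic difference is that you verify $df(a)\in\flin(E^*,F^*)$ via Lemma \ref{L:flin} rather than checking finiteness of $df(a)(x)$ directly.
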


\begin{proof}
We will only show that $f$ is uniformly differentiable at finite points; the argument is the same for higher derivatives.  Suppose $a\in \int(U^*)\cap E_{\ns}$ and $x\in E_{\ns}$.  Then since $df$ is continuous, we know that $df(a)(x)\sim df(\st(a))(\st(x))$, whence $df(a)(x)\in F_{\ns}\subseteq F_f$.  Now suppose that $\delta$ is a positive element of $\mu(\r)$.  Then $$\f(a,x,\delta)\sim \f(\st(a),\st(x),0)=df(\st(a),\st(x))\sim df(a)(x),$$ since $\f$ and $df$ are continuous. 
\end{proof}

\

\section{Finite Functions}

\

\noindent Throughout this section, $E$ and $F$ continue to denote locally convex spaces, but now $U$ denotes an open neighborhood of $0$ in $E$.  We still assume that $f:U\to F$ is any function.

\

\begin{df}
We say that $f$ is a \textbf{finite function} if $f(U^*\cap E_f)\subseteq F_f$.
\end{df}

\

\noindent  In order for $f$ to induce a function on the nonstandard hulls, a necessary requirement is that $f$ be a finite function.  Using Nelson's algorithm (see \cite{Ne}), one can give a standard translation of the notion that $f$ is a finite function, but this ends up being a very complicated condition.  Instead, we seek to prove that $f$ is finite under some natural assumptions.  

\

\noindent Recall that a subset $B$ of a topological vector space $E$ is \textit{bounded} if for any neighborhood $U$ of $0$ in $E$, there exists $n$ such that $B\subseteq nU$.  It is a well-known fact (see Theorem 2.1 of \cite{HM}) that $B$ is bounded if and only if $B^*\subseteq E_f$.  We thus get the following easy lemma.

\

\begin{lemma}
If $f(U)$ is a bounded subset of $F$, then $f$ is a finite function.
\end{lemma}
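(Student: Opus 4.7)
The plan is to invoke directly the cited characterization of bounded sets (Theorem 2.1 of \cite{HM}), namely that a subset $B$ of a topological vector space is bounded if and only if $B^{*}\subseteq E_{f}$. Applied to the set $B=f(U)\subseteq F$, the hypothesis that $f(U)$ is bounded in $F$ yields $f(U)^{*}\subseteq F_{f}$.

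Now pick an arbitrary point $x\in U^{*}\cap E_{f}$. By transfer of the trivial standard statement that $f$ maps $U$ into $f(U)$, the nonstandard extension $f^{*}$ maps $U^{*}$ into $f(U)^{*}$; in particular $f(x)\in f(U)^{*}\subseteq F_{f}$. Since $x$ was an arbitrary element of $U^{*}\cap E_{f}$, this shows $f(U^{*}\cap E_{f})\subseteq F_{f}$, which is exactly the assertion that $f$ is a finite function.

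I do not expect any real obstacle here: the whole content is bookkeeping between the bounded-set characterization and the definition of a finite function, and the hypothesis $x\in E_{f}$ is not even used beyond matching the statement of the definition. The only thing to be careful about is to phrase the transfer step cleanly so that the inclusion $f^{*}(U^{*})\subseteq f(U)^{*}$ is correctly invoked, which is immediate from transferring the sentence $\forall y\in U\,(f(y)\in f(U))$.
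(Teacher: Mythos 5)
Your proof is correct and is exactly the argument the paper intends: the lemma is presented as an immediate consequence of the characterization that $B$ is bounded iff $B^*\subseteq F_f$, combined with the transfer inclusion $f(U^*)\subseteq f(U)^*$. Nothing further is needed.
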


\

\noindent A less trivial observation is the following.

\

\begin{lemma}\label{L:unifpns}
Suppose $f:U\to F$ is uniformly continuous.  Let $U_1$ be a symmetric open neighborhood of $0$ in $E$ such that $U_1+U_1\subseteq U$.  Then $f(U_1^*\cap E_{\pns})\subseteq F_f$.  In particular, if $E$ is an (HM)-space, then $f|U_1$ is a finite function.  
\end{lemma}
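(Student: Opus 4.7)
The plan is to prove the slightly stronger containment $f(U_1^*\cap E_{\pns})\subseteq F_{\pns}$, from which $f(U_1^*\cap E_{\pns})\subseteq F_f$ follows via $F_{\pns}\subseteq F_f$; the ``in particular'' clause is then immediate, since for an (HM)-space $E_{\pns}=E_f$, so $U_1^*\cap E_f=U_1^*\cap E_{\pns}$.

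To establish the stronger inclusion, I would fix $x\in U_1^*\cap E_{\pns}$ and an arbitrary open neighborhood $W$ of $0$ in $F$, and try to produce a standard $z\in F$ with $f(x)-z\in W^*$. First, uniform continuity of $f$ on $U$ yields a standard symmetric open neighborhood $V$ of $0$ in $E$ such that $u_1-u_2\in V$ implies $f(u_1)-f(u_2)\in W$ for all $u_1,u_2\in U$; by intersecting with $U_1$ I may further assume $V\subseteq U_1$. Then pre-nearstandardness of $x$ supplies a standard $y\in E$ with $x-y\in V^*$, and I would set $z:=f(y)$.

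The step that actually uses the two hypotheses on $U_1$ (symmetry, and $U_1+U_1\subseteq U$) is the verification that $y\in U$, needed in order to invoke uniform continuity. This should fall out from $y=x-(x-y)$ together with $x\in U_1^*$ and $x-y\in V^*\subseteq U_1^*$: symmetry of $U_1$ gives $y\in U_1^*+U_1^*\subseteq U^*$, and since $y$ is standard, $y\in U$. Transferring the uniform continuity statement to the nonstandard points $x,y\in U^*$ with $x-y\in V^*$ then produces $f(x)-f(y)\in W^*$, which is exactly what we need.

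I expect the only real (and still modest) obstacle to be precisely this bookkeeping that keeps the standard approximant $y$ inside the domain $U$; everything else is a routine combination of the transfer principle applied to the definition of uniform continuity and the definition of pre-nearstandardness.
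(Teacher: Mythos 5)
Your proof is correct and uses essentially the same argument as the paper: pick a standard approximant $y$ of $x$ within a uniform-continuity modulus $V$, verify $y\in U$ via symmetry of $U_1$ and $U_1+U_1\subseteq U$, and conclude by transfer. The only difference is that you prove the slightly stronger inclusion $f(U_1^*\cap E_{\pns})\subseteq F_{\pns}$, whereas the paper shows $f(x)\in F_f$ directly by testing a single continuous seminorm $q$ on $F$; your strengthening is free and correct.
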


\begin{proof}
Let $x\in U_1^*\cap E_{\pns}$.  We wish to show that $f(x)\in F_f$.  Let $q$ be a continuous seminorm on $F$.  Since $f$ is uniformly continuous, there is a symmetric open neighborhood $V$ of $0$ such that whenever $a,b\in U$ are such that $a-b\in V$, then $q(f(a)-f(b))<1$.  Since $x\in E_{\pns}$, we can find $y\in E$ such that $x-y\in U_1^*\cap V^*$.  Then $y=x+(y-x)\in U^*$, whence $q(f(x)-f(y))<1$.  Since $y$ is standard, $q(f(y))\in \r_f$, whence $q(f(x))\in \r_f$.  Since $q$ was an arbitrary continuous seminorm on $F$, this shows that $f(x)\in F_f$.
\end{proof}

\

\noindent  We can improve Lemma \ref{L:unifpns} if we further assume that $U$ is convex, which is certainly the case for our applications.  Recall that $f$ is said to be \textit{Lipschitz on large distances} if for any continuous seminorm $r$ on $E$ and for any continuous seminorm $q$ on $F$, there is a continuous seminorm $p$ on $E$ so that $q(f(x_1)-f(x_2))\leq p(x_1-x_2)$ for all $x_1,x_2\in U$ for which $r(x_1-x_2)\geq 1$.  We will need the following fact.

\

\begin{fact}  (\cite{CK})  A uniformly continuous mapping from a convex subset of a locally convex space $E$ into a locally convex space $F$ is Lipschitz on large distances.  
\end{fact}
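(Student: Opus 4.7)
The plan is to prove this by a chaining argument along a line segment, which is where convexity enters. Fix continuous seminorms $r$ on $E$ and $q$ on $F$. By uniform continuity of $f$, there is a continuous seminorm $p_0$ on $E$ and a $\delta\in\r^{>0}$ such that for all $x,y\in U$, if $p_0(x-y)<\delta$ then $q(f(x)-f(y))<1$.

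Given $x_1,x_2\in U$ with $r(x_1-x_2)\geq 1$, I would take $n$ to be the least positive integer with $p_0(x_1-x_2)/n<\delta$; this gives the bound $n\leq p_0(x_1-x_2)/\delta+1$. For $k=0,1,\ldots,n$, set $y_k:=x_1+(k/n)(x_2-x_1)$. Convexity of $U$ ensures $y_k\in U$, and by construction $p_0(y_k-y_{k-1})=p_0(x_1-x_2)/n<\delta$, so $q(f(y_k)-f(y_{k-1}))<1$ for each $k$. The triangle inequality then yields
\[
q(f(x_1)-f(x_2))\leq \sum_{k=1}^{n} q(f(y_k)-f(y_{k-1}))<n.
\]

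To conclude, I use the hypothesis $r(x_1-x_2)\geq 1$ to absorb the constant coming from the rounding: setting $p:=(1/\delta)p_0+r$, which is a continuous seminorm on $E$, we have
\[
n\leq \tfrac{1}{\delta}p_0(x_1-x_2)+1\leq \tfrac{1}{\delta}p_0(x_1-x_2)+r(x_1-x_2)=p(x_1-x_2),
\]
so $q(f(x_1)-f(x_2))\leq p(x_1-x_2)$, as required.

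The step I expect to require the most care is the final one: without the assumption $r(x_1-x_2)\geq 1$, one only obtains a bound of the shape $p_0(x_1-x_2)/\delta+1$, which is affine rather than seminormic and so cannot be realized as $p(x_1-x_2)$ for any seminorm $p$. The whole point of restricting to "large distances" is precisely to let the $r$-term swallow this additive constant. The partition step itself is essentially the standard Banach-space chaining proof; convexity is used only to guarantee $y_k\in U$, and the local convex structure is handled by working with a single pair of seminorms at a time.
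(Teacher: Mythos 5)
The paper states this as a Fact cited to Corson and Klee \cite{CK} and provides no proof of its own, so there is nothing internal to compare against. Your chaining argument is correct and is essentially the standard Corson--Klee proof: convexity keeps the partition points $y_k$ in the domain, and the hypothesis $r(x_1-x_2)\geq 1$ is used precisely to absorb the $+1$ coming from the ceiling of $p_0(x_1-x_2)/\delta$ into the seminorm $p=\tfrac{1}{\delta}p_0+r$. The one step you leave implicit (and it is routine) is that uniform continuity in the locally convex setting really does yield a single seminorm $p_0$ and a single $\delta>0$ of the stated form; from a basic $0$-neighborhood $\bigcap_{i=1}^m\{z\in E \ | \ p_i(z)<\epsilon_i\}$ witnessing uniform continuity against $\{w\in F \ | \ q(w)<1\}$, one may take $p_0=\sum_{i=1}^m \epsilon_i^{-1}p_i$ and $\delta=1$.
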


\

\begin{lemma}\label{L:uniffin}
Suppose $f:U\to F$ is uniformly continuous and $U$ is convex.  Then $f$ is a finite function.
\end{lemma}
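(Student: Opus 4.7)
The goal is to verify that $q(f(x))\in\r_f$ for each $x\in U^*\cap E_f$ and each continuous seminorm $q$ on $F$. Since $0\in U$ and $q(f(0))\in\r$, it suffices to bound $q(f(x)-f(0))$.

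My approach is to combine two tools: uniform continuity, which gives good control when $x$ is close to $0$, and the Fact (Lipschitz on large distances), which gives good control when $x$ is far from $0$ and whose invocation is where convexity of $U$ is needed. First, I apply uniform continuity with tolerance $1$: by transfer, there exist a continuous seminorm $s$ on $E$ and a standard $\delta\in\r^{>0}$ such that for all $a,b\in U^*$ with $s(a-b)<\delta$, we have $q(f(a)-f(b))<1$. Second, I apply the Fact (legitimate by convexity of $U$) with the continuous seminorm $r:=s/\delta$ and the seminorm $q$: by transfer, there exists a continuous seminorm $p$ on $E$ such that for all $a,b\in U^*$ with $s(a-b)\geq\delta$, we have $q(f(a)-f(b))\leq p(a-b)$.

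Now I split on the value of $s(x)$. If $s(x)<\delta$, applying the uniform-continuity estimate to $a=x$, $b=0$ gives $q(f(x)-f(0))<1$. If $s(x)\geq\delta$, applying the large-distance Lipschitz estimate to $a=x$, $b=0$ gives $q(f(x)-f(0))\leq p(x)$, which lies in $\r_f$ because $x\in E_f$. In either case $q(f(x)-f(0))\in\r_f$, hence $q(f(x))\in\r_f$. Since $q$ was arbitrary, $f(x)\in F_f$.

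The only delicate point is aligning the two regimes so that the dichotomy is exhaustive: this is handled by the scaling $r=s/\delta$, chosen precisely so that the condition $r(a-b)\geq 1$ of the Fact coincides with the complementary regime $s(a-b)\geq\delta$ of the uniform-continuity estimate. I do not anticipate a serious obstacle; convexity enters only through the Fact, which is invoked as a black box.
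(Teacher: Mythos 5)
Your proof is correct and follows essentially the same route as the paper's: both compare $f(x)$ with $f(0)$, invoke the Corson--Klee fact (Lipschitz on large distances, where convexity enters) to handle $x$ at non-infinitesimal distance from $0$, and use continuity for the remaining case. The only cosmetic difference is that you delimit the two regimes by a fixed uniform-continuity modulus $(s,\delta)$ chosen in advance, whereas the paper splits on $x\in\mu(E)$ versus $x\notin\mu(E)$ and picks the seminorm $r$ depending on $x$; both dichotomies are exhaustive and both arguments go through.
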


\

\begin{proof}
Let $x\in U^*\cap E_f$ and let $q$ be a continuous seminorm on $F$.  We wish to show that $q(f(x))\in \r_f$.  Clearly if $x\in\mu(E)$, then by continuity at $0$, we have $f(x)\in \mu(F)$.  We thus may assume that $x\notin \mu(E)$.  Choose a continuous seminorm $r$ on $E$ and $\epsilon \in \r^{>0}$ so that $r(x)\geq \epsilon$.  By replacing $r$ by $\frac{1}{\epsilon}r$, we may assume that $r(x)\geq 1$.  Let $p$ be a continuous seminorm on $E$ such that $q(f(x_1)-f(x_2))\leq p(x_1-x_2)$ for all $x_1,x_2\in U$ for which $r(x_1-x_2)\geq 1$.  Then since $r(x)\geq 1$, we have $q(f(x)-f(0))\leq p(x)\in \r_f$, whence $q(f(x))\in \r_f$.  
\end{proof}

\

\noindent A stronger assumption to impose on $f$ is that it is \textit{Lipschitz}.  Recall that $f$ is said to be Lipschitz if for every continuous seminorm $q$ on $F$, there is a continuous seminorm $p$ on $E$ so that $q(f(x_1)-f(x_2))\leq p(x_1-x_2)$ for every $x_1,x_2\in U$.

\

\begin{lemma}
If $f$ is Lipschitz, then $f$ is a finite function.
\end{lemma}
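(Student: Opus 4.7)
The plan is to imitate the argument of Lemma \ref{L:uniffin} but in a streamlined way, exploiting that $0\in U$ and that the Lipschitz inequality holds globally on $U$ (with no large-distance proviso). Fix $x\in U^*\cap E_f$ and an arbitrary continuous seminorm $q$ on $F$. I need to show $q(f(x))\in \r_f$.

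By hypothesis, there is a continuous seminorm $p$ on $E$ such that $q(f(x_1)-f(x_2))\leq p(x_1-x_2)$ for all $x_1,x_2\in U$. Transferring this first-order statement to the nonstandard extension, the same inequality holds for all $x_1,x_2\in U^*$. Since $0\in U\subseteq U^*$, I apply this with $x_1=x$ and $x_2=0$, obtaining $q(f(x)-f(0))\leq p(x)$.

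Now $p(x)\in \r_f$ because $x\in E_f$, and $q(f(0))\in \r$ (in particular, finite) because $f(0)$ is standard. The triangle inequality then gives
\[
q(f(x))\leq q(f(x)-f(0))+q(f(0))\leq p(x)+q(f(0))\in \r_f.
\]
Since $q$ was arbitrary, $f(x)\in F_f$, completing the proof.

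There is no real obstacle here: the only subtlety is remembering to transfer the Lipschitz inequality before applying it at the nonstandard point $x$, and to use $0$ as an anchor so that the right-hand side becomes a seminorm of $x$ itself, which is automatically finite.
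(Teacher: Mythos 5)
Your proof is correct and is essentially the same as the paper's: both fix $x\in U^*\cap E_f$ and a continuous seminorm $q$, transfer the Lipschitz inequality, apply it at $x$ and $0$ to get $q(f(x)-f(0))\leq p(x)\in \r_f$, and conclude $q(f(x))\in\r_f$. You merely spell out the transfer step and the final triangle inequality that the paper leaves implicit.
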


\begin{proof}
Let $x\in U^*\cap E_f$ and let $q$ be a continuous seminorm on $F$.  Choose $p$ as in the definition of Lipschitz.  Then $q(f(x)-f(0))\leq p(x)\in \r_f$, which implies that $q(f(x))\in \r_f$.
\end{proof}

\

\noindent  We end this section with a question.  For $x_0\in E$ and $p$ a continuous seminorm on $E$, let $B^p_1(x_0)$ denote the set $\{x\in E \ | \ p(x-x_0)<1\}$.  Say that $f$ is \textit{locally Lipschitz} if for every $x_0\in U$ and every continuous seminorm $q$ on $F$, there is a continuous seminorm $p$ on $E$ such that $B^p_1(x_0)\subseteq U$ and $q(f(x)-f(y))\leq p(x-y)$ for all $x,y\in B^p_1(x_0)$.  One has the following result:

\

\begin{fact}  (\cite{G}, Lemma 1.9)  If $f$ is $C^1$, then $f$ is locally Lipschitz.
\end{fact}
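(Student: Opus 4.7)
The plan is to combine the continuity of $df$ near $(x_0,0)$ with the linearity of $df(x)$ in its second argument to produce a single continuous seminorm $p$ on $E$ that simultaneously (a) has $B^p_1(x_0)$ sitting inside $U$ and controls $q\circ df(x)$ on that ball, and (b) serves as the Lipschitz constant for $f$ via the Mean Value Theorem for locally convex spaces.

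First I would fix $x_0\in U$ and a continuous seminorm $q$ on $F$. Since $df\colon U\times E\to F$ is continuous and $df(x_0)(0)=0$, there exist a continuous seminorm $p_0$ on $E$ and $\delta>0$ such that $B^{p_0}_\delta(x_0)\subseteq U$ and $q(df(x)(h))<1$ whenever $p_0(x-x_0)<\delta$ and $p_0(h)<\delta$. Using the linearity of $df(x)$ in $h$ together with a rescaling argument, this initial local bound globalizes in $h$ to
$$q(df(x)(h))\leq \tfrac{2}{\delta}\,p_0(h)\quad\text{for all } x\in B^{p_0}_\delta(x_0) \text{ and all } h\in E;$$
for $p_0(h)>0$ one rescales $h$ to have $p_0$-size $\delta/2$, and for $p_0(h)=0$ one applies the bound at arbitrarily large multiples of $h$ to deduce $q(df(x)(h))=0$. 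Setting $p:=\tfrac{2}{\delta}p_0$, one then has $B^p_1(x_0)\subseteq B^{p_0}_\delta(x_0)\subseteq U$ and $q(df(x)(h))\leq p(h)$ for every $x\in B^p_1(x_0)$ and every $h\in E$.

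Next, because $p$ is a seminorm, the ball $B^p_1(x_0)$ is convex, so for any $x,y\in B^p_1(x_0)$ the segment $\{y+t(x-y):t\in[0,1]\}$ stays inside $B^p_1(x_0)$. Applying the Mean Value Theorem for $C^1$ maps between locally convex spaces, obtained by composing with continuous linear functionals on $F$, invoking the scalar mean value estimate, and taking the supremum over those functionals via the Hahn--Banach characterization of $q$, yields
$$q(f(x)-f(y))\leq \sup_{t\in[0,1]} q\bigl(df(y+t(x-y))(x-y)\bigr)\leq p(x-y),$$
which is exactly the locally Lipschitz condition at $x_0$.

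The main obstacle is arranging the Mean Value Theorem in a locally convex target: unlike in the Banach case there is no natural order on $F$, and one circumvents this by the Hahn--Banach reduction to real-valued functions indicated above. The rescaling step in the first paragraph is also subtle and depends essentially on the linearity of each $df(x)$ in $h$, which is what lets a local control near $h=0$ be upgraded to a uniform seminorm bound in $h$ over an entire ball around $x_0$.
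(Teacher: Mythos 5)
Your argument is correct and is essentially the standard proof of this fact (the paper itself gives no proof, only the citation to Gl\"ockner's Lemma 1.9, whose argument runs exactly along your lines): continuity of $df$ at $(x_0,0)$ plus homogeneity of $df(x)$ in $h$ yields a uniform seminorm bound $q(df(x)(h))\leq p(h)$ on a $p$-ball, and the Hahn--Banach reduction of the Mean Value Theorem to scalar-valued functions converts this into the Lipschitz estimate $q(f(x)-f(y))\leq p(x-y)$. Both delicate points you flag --- the rescaling in $h$ and the absence of a vector-valued MVT --- are handled correctly.
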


\

\begin{question} It does not appear that assuming that $f$ is locally Lipschitz implies that $f$ is a finite function.  Does the assumption that $f$ is uniformly $C^1$ at finite points (or even uniformly smooth at finite points) imply that $f$ is a finite function?
\end{question}

\

\section{Localizing Enlargeability}

\

\noindent In this section, we present our main theorem on localizing enlargeability and some of its corollaries.  We first introduce some of the necessary definitions from locally convex Lie theory.

\

\begin{df}
A \textbf{local Lie group} is a tuple $(G,D,m_G,1)$ such that $G$ is a smooth manifold modeled on a locally convex space, $D\subseteq G\times G$ is open, $m_G:D\to G$ (the product map) is smooth, and such that the following conditions hold:

\begin{itemize}
\item Suppose $xy$ and $yz$ are defined, i.e. $(x,y)\in D$ and $(y,z)\in D$.  Then if one of the products $(xy)z$ or $x(yz)$ are defined, then so is the other and both products are equal;
\item For each $x\in G$, we have $(x,1)\in D$ and $(1,x)\in D$ and $m_G(x,1)=m_G(1,x)=x$;
\item For each $x\in G$, there is a unique $x^{-1}\in G$ such that $m_G(x,x^{-1})=m_G(x^{-1},x)=1$;
\item The map $x\mapsto x^{-1}:G \to G$ is smooth;
\item If $(x,y)\in D$, then $(y^{-1},x^{-1})\in D$.
\end{itemize}
\end{df}

\

\begin{df}\label{D:locexp}
A Lie algebra $\g$ is called \textbf{locally exponential} if there exists a circular, convex open $0$-neighborhood $U\subseteq \g$ and an open subset $D\subseteq \g \times \g$ on which we have a smooth map $$m_U:D\to U, \qquad (x,y)\mapsto x * y$$ such that $(U,D,m_U,0)$ is a local Lie group satisfying:
\begin{enumerate}
\item[(E1)] For $x\in U$ and $|t|,|s|,|t+s|\leq 1$, we have $(tx,sx)\in D$ and $tx * sx=(t+s)x$;
\item[(E2)] The second order term in the Taylor expansion of $m_U$ at $(0,0)$ is $[x,y]$.
\end{enumerate}
\end{df}

\

\noindent The condition (E2) is only there to ensure that the Lie algebra of the local Lie group $(U,D,m_U,0)$ is isomorphic to $\g$.

\

\begin{df}
Let $G$ be a Lie group with Lie algebra $\g$.
\begin{enumerate}
\item[(i)] A \textbf{smooth exponential map} for $G$ is a smooth function $\exp_G:\g\to G$ for which the curves $\gamma_x(t):=\exp_G(tx)$ ($x\in \g$)  are 1-parameter subgroups of $G$ satisfying $\gamma_x'(0)=x$.  (Recall that a 1-parameter subgroup of $G$ is a continuous homomorphism $\r\to G$.)  It is a fact that $G$ can possess at most one smooth exponential function.
\item[(ii)] $G$ is said to be \textbf{locally exponential} if there exists a smooth exponential map $\exp_G$ for $G$, an open $0$-neighborhood $U\subseteq \g$, and an open $e$-neighborhood $V\subseteq G$ such that $\exp_G|U$ is a diffeomorphism of $U$ onto $V$.
\end{enumerate}
\end{df}

\

\noindent The above terminology is due to the fact that the Lie algebra of a locally exponential Lie group is a locally exponential Lie algebra; this is clear from using exponential coordinates.  Thus a locally exponential Lie algebra is a natural candidate to be the Lie algebra of a locally exponential Lie group.

\

\begin{df}
A locally exponential Lie algebra $\g$ is said to be \textbf{enlargeable} if it is the Lie algebra of a locally exponential Lie group.
\end{df}

\

\noindent The outline of the proof of Theorem \ref{T:pestov} is as follows.  An internal Lie algebra $\h\in \mathcal{H}^*$ has been chosen so that $\g$ embeds isometrically into $\hh$ as a closed subalgebra.  By assumption, there is an internal Banach-Lie group $H$ for which $\h$ is its Lie algebra.  As mentioned in the Introduction, $\hh$ is a Banach-Lie algebra and Pestov shows that $\hh$ is enlargeable by constructing the nonstandard hull of $H$, which has $\hh$ as its Lie algebra.  This then finishes the proof as a closed Lie subalgebra of an enlargeable Banach-Lie algebra is enlargeable. 

\

\noindent We now explain the set-up that allows us to pursue the above method of proof for the class of locally exponential Lie algebras.  Suppose that $\g$ is a locally exponential Lie algebra witnessed by $(U,D,m_U,0)$.  For $x,y\in U$, we sometimes write $x*y$ instead of $m_U(x,y)$.  Suppose $\mathcal{H}$ is a directed family of closed subalgebras of $\g$ and suppose that there exists an open, symmetric neighborhood $V\subseteq U$ of $0$ in $\g$ with $V\times V \subseteq D$ and such that 

\begin{enumerate}
\item $\bigcup \mathcal{H}$ is dense in $\g$;
\item for each $\h\in \mathcal{H}$, there is a locally exponential Lie group $H$ such that $L(H)\cong \h$;
\item for each $\h\in \mathcal{H}$, if $H$ is a connected locally exponential Lie group such that $L(H)\cong \h$, then $\exp_H|V\cap \h:V\cap \h \to H$ is injective.
\end{enumerate}

\

\noindent The preceding conditions are the direct analogues of Pestov's assumptions in the Banach setting.  In order to make some of Pestov's arguments go through, it seems necessary to add two further conditions.  In order to explain the new conditions that we assume, let us introduce some notation.  For each $\h \in \mathcal{H}$, let us fix a connected Lie group $H$ such that $L(H)\cong \h$ and let $W_\h$ be an open symmetric neighborhood of $e$ in $H$ contained in $\exp_H(V)$.  Let $D_\h:=\{(x,y)\in \h \times \h \ | \ \exp_H(x)\cdot \exp_H(y)\subseteq W_\h\}$, an open subset of $\h \times \h$.  Define $*_\h:D_\h \to V\cap \h$ by $x*_\h y=\exp_H^{-1}(\exp_H(x)\cdot \exp_H(y))$.  Our new assumptions are that $W_\h$ can be chosen so that there exists a continuous seminorm $p$ on $\g$ for which:

\

\begin{enumerate}
\item[(4)] $\{x\in \h | \ p(x)<1\}^{\times 2} \subseteq D_\h$;
\item[(5)] $\{x\in \g \ | \ p(x)<1\}^{\times 2} \subseteq D$ and $m_U|\{x\in \g \ | \ p(x)<1\}^{\times 2}$ is uniformly continuous.
\end{enumerate}

\noindent  We will need the following consequence of assumption (4).

\

\begin{fact} (\cite{GN}, Proposition 3.7.17)
Assumption (4) implies that for all $\h \in \mathcal{H}$ and all $x,y\in \h$, if $p(x),p(y)<1$, then $x*_\h y=m_U(x,y)$.  (The statement of Proposition 3.7.17 is less precise than what we claim and one needs only to read the proof of that proposition to see that it yields this fact immediately.)
\end{fact}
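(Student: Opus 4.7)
The plan is to show that both $m_U|_{\h \times \h}$ and $*_\h$ endow $\h$ with the structure of a local Lie group for which the identity map of $\h$ is the exponential function, then to invoke a uniqueness principle to conclude that the two operations coincide on a neighborhood of $(0,0)$, and finally to propagate the equality to all of $\{x \in \h : p(x)<1\}^{\times 2}$ using assumption (4).

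To set up both local Lie group structures, I would first observe that $(U \cap \h,\, D \cap (\h \times \h),\, m_U|_{\h\times\h},\, 0)$ is itself a local Lie group (after possibly shrinking): associativity and the remaining axioms descend from $(U, D, m_U, 0)$, while local closure of $\h$ under $m_U$ near $0$ follows from $\h$ being a closed subalgebra together with (E1)--(E2). Its Lie algebra, read off from the quadratic term of $m_U$ at $(0,0)$ via (E2), is $\h$ with the bracket inherited from $\g$, and by (E1) each curve $t \mapsto tx$ ($x \in \h$) is a 1-parameter subgroup, so the identity map of $\h$ is the exponential function of this local Lie group. The local Lie group $(\exp_H^{-1}(W_\h),\, D_\h,\, *_\h,\, 0)$ likewise has Lie algebra canonically identified with $\h$, and its 1-parameter subgroups through $x$ are $t \mapsto \exp_H^{-1}(\exp_H(tx)) = tx$, so its exponential function is again the identity on $\h$.

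I would then invoke the uniqueness of locally exponential structure: two local Lie group structures on $\h$ sharing the identity map as exponential function must coincide on a neighborhood of $0$. Heuristically, the product in exponential coordinates is determined by its left-invariant vector fields, which are in turn determined by the 1-parameter subgroups and the bracket; equivalently, the identity map $\h \to \h$ is a smooth diffeomorphism intertwining the 1-parameter subgroups of the two local groups, and by the locally exponential analogue of the standard Lie-theoretic fact that such a map is a local homomorphism, one obtains $m_U(x,y) = x *_\h y$ for $(x, y)$ in some neighborhood of $(0,0)$.

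To promote this germ-equality to agreement on the full set $\{x \in \h : p(x) < 1\}^{\times 2}$, I would use a rescaling and iteration argument. For fixed $(x, y)$ in this set, choose $n$ large so that $x/n$ and $y/n$ lie in the neighborhood where the two products agree; then use associativity in both local groups together with (E1) and its analogue for $*_\h$ (which gives $\frac{k}{n}x = \frac{1}{n}x *_\h \cdots *_\h \frac{1}{n}x$, $k$ factors) to compute $m_U(x,y)$ and $x *_\h y$ as the same iterated product of small pieces. Assumption (4) is precisely what guarantees that all intermediate $*_\h$-products stay in $D_\h$ so that the iteration is well-defined. I expect this propagation step to be the main obstacle: smoothness alone is not enough to force two operations agreeing on a germ to agree on a larger uniform set, and the argument must go through the geometric content of (4) combined with the one-parameter-subgroup property (E1), which is exactly why assumption (4) is singled out in the statement.
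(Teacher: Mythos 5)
The paper offers no proof of this Fact at all --- it is imported wholesale from Gl\"ockner--Neeb, with the remark that one must read \emph{their} proof to extract the precise statement --- so your attempt cannot be matched against anything in the text and has to stand on its own. It does not, and the decisive gap is at your very first step: the claim that $\h$ is locally closed under $m_U$ (so that $m_U$ restricts to a local group structure on $\h$) because $\h$ is a closed subalgebra and (E1)--(E2) hold. Condition (E2) only pins down the second-order Taylor coefficient of $m_U$ at $(0,0)$, and even if every Taylor coefficient at the origin were a Lie polynomial (hence preserved $\h$), a merely smooth local multiplication is not determined by its Taylor expansion; outside the analytic/BCH setting, $[\h,\h]\subseteq \h$ gives no control over $m_U(\h\times\h)$. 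Since $x*_\h y$ lands in $V\cap\h$ by construction, the local closure of $\h$ under $m_U$ is itself the hard half of the assertion being proved, so assuming it is circular. Relatedly, the ``uniqueness of locally exponential structure'' you invoke in the second step is exactly the technical content of the cited Proposition 3.7.17: in the locally convex setting one compares the curves $t\mapsto x*(ty)$ and $t\mapsto x*_\h(ty)$, shows both solve the same initial value problem governed by $dm_U$, and appeals to uniqueness of such solutions, which is part of the locally exponential package rather than a soft consequence of smoothness. Your sketch replaces this with the word ``heuristically,'' which is where the proof actually lives.

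The propagation step has independent problems, which you partly anticipate. Rebracketing in a local group is legitimate only when every intermediate product is defined, and assumption (4) only guarantees that \emph{pairs with $p$-seminorm below $1$} lie in $D_\h$: the intermediate elements of the form $\tfrac{k}{n}x *_\h \tfrac{j}{n}y$ produced by your iteration carry no a priori bound on their $p$-seminorm, so nothing licenses multiplying them further, and the corresponding products on the $m_U$ side need not lie in $\h$ at all until the closure problem above is settled. (Note also that for $m_U(x,y)$ to be defined on all of $\{x\in\h \mid p(x)<1\}^{\times 2}$ you are tacitly using assumption (5), whereas the Fact is stated as a consequence of (4).) In short, the proposal correctly identifies the circle of ideas --- two local multiplications with the same exponential and the same bracket must agree --- but both constituents of the Fact, the closure of $\h$ under $m_U$ and the uniqueness statement, are left unproved.
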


\

\noindent For the rest of this section, we fix $\h\in \mathcal{H}^*$ and suppose $H$ is a corresponding internal connected locally exponential Lie group whose Lie algebra is $\h$.  For simplicity, let $\exp$ denote the exponential map for $H$.  By the above fact, we can write $x*y$ instead of $x*_\h y$ when $x,y\in \h$ are such that $p(x)<1$ and $p(y)<1$.  

\

\noindent The first step in constructing the nonstandard hull of $H$ is to define $H_f$.  In the Banach setting of Pestov's paper, he defines $H_f$ in a certain fashion, but then ends up showing that $H_f=\bigcup_n \exp(W)^n$, where $W$ is any ball around $0$ in $\h$ of finite, noninfinitesimal radius.  It then follows that $H_f=\bigcup_n \exp(\h_f)^n$.  Indeed, one inclusion is clear, for $W\subseteq \h_f$.  Now suppose $x\in \h_f$.  Choose $m$ so that $\frac{1}{m}x\in W$.  Then $\exp(x)=\exp(\frac{1}{m}x)^m\in \exp(W)^m$, proving the other direction.  I thus propose the following definition in our locally convex setting. 

\

\begin{df}
$H_f:=\bigcup_n \exp(\h_f)^n$.
\end{df}

\

\begin{lemma} $H_f$ is a group.
\end{lemma}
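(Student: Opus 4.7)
The plan is to verify the three subgroup conditions for $H_f$ as a subset of the group $H$, each of which reduces to an elementary observation about $\h_f$ plus the standard compatibility between $\exp$ and the one-parameter subgroups.

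First, to see that $H_f$ contains the identity, I would note that $0 \in \h_f$ (since $\h_f$ is a real Lie algebra by the first lemma of Section~2) and that $\exp(0) = e$, so $e \in \exp(\h_f) \subseteq H_f$ with $n=1$.

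For closure under multiplication, I would observe that if $g \in \exp(\h_f)^n$ and $g' \in \exp(\h_f)^m$, then by definition of $A^n$ from the Introduction we can write $g = \exp(x_1)\cdots\exp(x_n)$ and $g' = \exp(y_1)\cdots\exp(y_m)$ with all $x_i, y_j \in \h_f$; hence $gg' \in \exp(\h_f)^{n+m} \subseteq H_f$.

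The only part that requires even a small remark is closure under inversion. The key input is the identity $\exp(x)^{-1} = \exp(-x)$ for all $x \in \h$, which follows from the definition of the exponential function: since $t \mapsto \exp(tx)$ is a one-parameter subgroup, $\exp(x)\exp(-x) = \exp(0) = e$. Given $g = \exp(x_1)\cdots\exp(x_n) \in \exp(\h_f)^n$, I then have $g^{-1} = \exp(-x_n)\cdots\exp(-x_1)$, and since $\h_f$ is a real vector space (again by the first lemma of Section~2) each $-x_i$ lies in $\h_f$, so $g^{-1} \in \exp(\h_f)^n \subseteq H_f$. I do not expect any real obstacle here; the lemma is essentially a bookkeeping check that the ingredients (that $\h_f$ is closed under negation and that $\exp$ respects inversion) fit together.
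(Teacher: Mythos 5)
Your proof is correct and follows essentially the same approach as the paper's: the paper also notes that $H_f$ clearly contains $e$ and is closed under products, and derives closure under inversion from the fact that $\h_f$ is closed under additive inverses together with $\exp(-x) = \exp(x)^{-1}$. You have simply spelled out the bookkeeping in more detail.
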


\begin{proof}
$H_f$ clearly contains $e$ and is closed under products.  That $H_f$ is closed under inverses follows from the fact that $\h_f$ is closed under additive inverses and the fact that $\exp(-x)=\exp(x)^{-1}$.
\end{proof}

\

\noindent In analogy with Pestov, we make the following definition.

\

\begin{df}
$\mu_H:=\exp(\mu_\h)$.
\end{df}

\

\noindent The following lemma appears in Pestov's paper, where he uses facts about the BCH series in Banach-Lie algebras to reach this conclusion.  We could not use such an argument and this is where assumptions (4) and (5) make their first appearance.

\

\begin{lemma}
$\mu_H$ is a normal subgroup of $H_f$.
\end{lemma}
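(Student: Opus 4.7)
The plan is to verify two separate facts about $\mu_H$: first that it is a subgroup of $H_f$, and second that it is normal. The subgroup part is the easier piece. Containment of the identity and closure under inverses are immediate from $e=\exp(0)$ and $\exp(x)^{-1}=\exp(-x)$. For closure under products, given $x_1,x_2\in \mu_\h$, both satisfy $p(x_i)<1$, so by (the transfer of) the Fact, $\exp(x_1)\exp(x_2)=\exp(m_U(x_1,x_2))$; the nonstandard characterization of uniform continuity of $m_U$ on $\{p<1\}^{\times 2}$ (assumption~(5)) yields $m_U(x_1,x_2)\sim m_U(0,0)=0$, and since $m_U(x_1,x_2)=x_1*_\h x_2\in\h$, this places it in $\mu_\h$.

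For normality, every $g\in H_f$ factors as $\exp(y_1)\cdots\exp(y_k)$ with $y_j\in\h_f$, so an induction on $k$ reduces the problem to showing that $\exp(y)\exp(x)\exp(-y)\in\mu_H$ whenever $y\in\h_f$ and $x\in\mu_\h$. The key move is to slice $y$ into small pieces: since $p(y)\in\r_f$, I pick a standard $N\in\n$ with $p(y)/N<1/2$ and set $a:=y/N$, so that $\exp(y)=\exp(a)^N$ by the $1$-parameter group axiom. I then define the iterated conjugates $x_0:=x$ and $x_{i+1}:=m_U(m_U(a,x_i),-a)$.

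The inductive claim is that each $x_i\in\mu_\h$ and the iteration is well-defined. Given $x_i\in\mu_\h$, both $p(a)<1/2$ and $p(x_i)\approx 0$ ensure $m_U(a,x_i)$ is defined; uniform continuity gives $m_U(a,x_i)\sim m_U(a,0)=a$, so $p(m_U(a,x_i))<1$ and a second application of $m_U$ is legal. Uniform continuity applied again yields $x_{i+1}=m_U(m_U(a,x_i),-a)\sim m_U(a,-a)=0$, and the Fact ensures $x_{i+1}\in\h$ since $m_U$ restricted to $(\h\times\h)\cap\{p<1\}^{\times 2}$ coincides with $*_\h$. Thus $x_{i+1}\in\mu_\h$. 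Iterating the identity $\exp(x_{i+1})=\exp(a)\exp(x_i)\exp(-a)$ yields $\exp(x_N)=\exp(y)\exp(x)\exp(-y)$, and $x_N\in\mu_\h$ completes the argument.

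The main obstacle is precisely this conjugation step for $y\in\h_f$ which is not itself small: the BCH manipulations Pestov used in the Banach case are unavailable here, and one cannot invoke $\exp(y)\exp(x)\exp(-y)=\exp(\mathrm{Ad}(\exp y)(x))$ without further preparation. The slicing argument circumvents this by making each individual conjugation take place entirely inside the ball $\{p<1\}$, where assumption~(5) provides the uniform continuity of $m_U$ needed to propagate infinitesimality through a standard number of iterations.
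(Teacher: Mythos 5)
Your proof is correct and follows essentially the same route as the paper's: verify the subgroup property via the Fact and uniform continuity of $m_U$, reduce normality to conjugation by a single $\exp(y)$ with $y\in\h_f$, and then slice $y$ into a standard finite number of pieces of small $p$-seminorm so that each conjugation happens inside $\{p<1\}^{\times 2}$ where uniform continuity propagates infinitesimality. The only (cosmetic) difference is that the paper secures $p(\frac{1}{m}z*w)<1$ by choosing an auxiliary seminorm $q\geq p$ in advance, whereas you get it from $m_U(a,x_i)\sim a$ together with $p(a)<1/2$.
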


\begin{proof}
First suppose that $x,y\in \mu_H$.  Choose $x_1,y_1\in \mu_\h$ such that $\exp(x_1)=x$ and $\exp(y_1)=y$.  Using the continuity of $m_U$ and the fact that $(x,y)\in D_\h$, we have $x_1*y_1\in \mu_\h$ and thus $\exp(x_1*y_1)\in \mu_H$.  But then $$xy=\exp(x_1)\exp(y_1)=\exp(x_1*y_1)\in \mu_H.$$  Since $-x_1\in \mu_\h$, we have $x^{-1}=\exp(-x_1)\in \mu_H$.  Hence, $\mu_H$ is a subgroup of $H_f$.

\

\noindent Now suppose $y\in H_f$ and $x\in \mu_H$.  We will show that $yxy^{-1}\in \mu_H$.  Write $x=\exp(x_1)$ with $x_1\in \mu_\h$ and write $y=y_1\cdots y_n$, with each $y_i=\exp(z_i)$ where $z_i\in \h_f$.  Then $$yxy^{-1}=\exp(z_1)\cdots \exp(z_n)\exp(x_1)\exp(-z_n)\cdots \exp(-z_1).$$  So by induction, it suffices to prove that if $z\in \h_f$ and $w\in \mu_\h$, then $\exp(z)\exp(w)\exp(-z)\in \mu_H$.  Choose a continuous seminorm $q\geq p$ on $\g$ so that for all $x,y\in \g$, if $q(x),q(y)<1$, then $p(x*y)<1$.  Choose $m$ so that $q(\frac{1}{m}z)<1$.  Then by uniform continuity of $m_U$, we know that $\frac{1}{m}z*w\sim \frac{1}{m}{z}$.  Since $p(\frac{1}{m}z*w)<1$, another application of uniform continuity implies that $(\frac{1}{m}z*w)*(-\frac{1}{m}z)\sim (\frac{1}{m}z)*(-\frac{1}{m}z)=0$.  Hence $\exp(\frac{1}{m}z)\exp(w)\exp(-\frac{1}{m}z)\in \mu_H$.  Since $$\exp(z)\exp(w)\exp(-z)=\exp(\frac{1}{m}z)^m\exp(w)\exp(-\frac{1}{m}z)^m,$$ we are finished with the proof of the lemma.
\end{proof}   

\

\begin{df}  We set $\hH:=H_f/\mu_H$ and let $\pi_H:H_f\to \hH$ be the canonical projection map.  
\end{df}

\

\begin{lemma}
Suppose $\pi_\h(x)=\pi_\h(y)$.  Then $\pi_H(\exp x)=\pi_H(\exp y)$.
\end{lemma}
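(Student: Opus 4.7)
The goal is to establish that $\exp(x)\exp(-y) \in \mu_H = \exp(\mu_\h)$ whenever $x, y \in \h_f$ satisfy $x - y \in \mu_\h$. The strategy is to reduce, via the 1-parameter subgroup identity $\exp(z)^n = \exp(nz)$, to the case where both $x$ and $y$ have been scaled into the $p$-ball of radius less than $1$, so that the Fact quoted from \cite{GN} identifies the internal group product with $m_U$ and assumption (5) can be applied.

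First I would pick a standard $n \in \n$ with $p(y/n) < 1/2$; since $(x-y)/n \in \mu_\h$, this forces $p(x/n) < 1$ as well, and moreover $x/n \sim y/n$. By the transfer to $\mathcal{H}^*$ of the Fact quoted just before the construction of $H_f$, we have $(x/n) *_\h (-y/n) = m_U(x/n, -y/n)$, so
\begin{equation*}
  \exp(x/n)\exp(-y/n) \;=\; \exp\bigl(m_U(x/n, -y/n)\bigr).
\end{equation*}
Assumption (5) says that $m_U$ is uniformly continuous on $\{z \in \g \mid p(z) < 1\}^{\times 2}$, and both $(x/n, -y/n)$ and $(x/n, -x/n)$ lie in this domain, so uniform continuity together with $-y/n \sim -x/n$ gives
\begin{equation*}
  m_U(x/n, -y/n) \;\sim\; m_U(x/n, -x/n) \;=\; (x/n) * (-x/n) \;=\; 0,
\end{equation*}
where the last equality comes from (E1) applied with $t=1, s=-1$. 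Hence $m_U(x/n, -y/n) \in \mu_\h$ and $\exp(x/n)\exp(-y/n) \in \exp(\mu_\h) = \mu_H$.

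Since $\mu_H$ is a normal subgroup of $H_f$ by the previous lemma, $\hH = H_f/\mu_H$ is a group in which $\pi_H(\exp(x/n)) = \pi_H(\exp(y/n))$. Taking $n$-th powers there and using $\exp(x/n)^n = \exp(x)$ (and analogously for $y$) yields $\pi_H(\exp x) = \pi_H(\exp y)$, as desired. The only genuine obstacle is the need to scale $x$ and $y$ into the domain on which the internal product agrees with $m_U$; once that is arranged, assumption (5) does the work, and the argument is a minor variant of the scaling trick already used to prove normality of $\mu_H$.
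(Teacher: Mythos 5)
Your proposal is correct and follows essentially the same route as the paper: scale $x$ and $y$ by a standard integer so that both lie in the $p$-ball of radius $<1$, use the quoted Fact to identify $*_\h$ with $m_U$, invoke uniform continuity of $m_U$ to see that $\exp(x/n)\exp(-y/n)\in\mu_H$, and then lift back to $x,y$ using $\exp(z/n)^n=\exp(z)$ and normality of $\mu_H$. The only cosmetic difference is in the final step: the paper peels the product $(\exp\frac{1}{m}x)^m(\exp(-\frac{1}{m}y))^m$ apart one inner pair at a time, whereas you take $n$-th powers directly in the quotient group $\hH$ using that $\pi_H$ is a homomorphism; these are the same argument, and yours is the tidier packaging.
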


\

\begin{proof}
Choose $m$ so that $p(\frac{1}{m}x),p(\frac{1}{m}y)<1$.  By uniform continuity of $m_U$, $\frac{1}{m}x*(-\frac{1}{m}y)\sim \frac{1}{m}y*(-\frac{1}{m}y)=0$.  Thus $\exp(\frac{1}{m}x*-\frac{1}{m}y)\in \mu_H$, whence $\exp(\frac{1}{m}x)\exp(-\frac{1}{m}y)\in \mu_H$.  Hence 
\begin{alignat}{2}
\pi_H(\exp(x)\exp(y)^{-1})&=\pi_H((\exp \frac{1}{m}x)^{m-1}(\exp \frac{1}{m}x)(\exp (-\frac{1}{m}y))(\exp (-\frac{1}{m}y)^{m-1}) \notag \\
                                             &=\pi_H((\exp \frac{1}{m}x)^{m-1}(\exp (-\frac{1}{m}y))^{m-1}) \notag \\ \notag
 \end{alignat}
Continuing in this fashion, one gets the desired result.
 \end{proof}
 
 \
 
 \noindent The above lemma allows us to define a function $\hexp:\hh\to \hH$ by $$\hexp(x+\mu_\h):=(\exp x)\mu_H.$$ From now on, we will use the notation $\l x \rr$ for $x+\mu_\h$ as introduced earlier in the paper.
 
 \
 
\begin{lemma}\label{L:inj}
$\hexp$ is injective on $\hat{W}:=\{\l x \rr \ | \ \p(\l x\rr)<1\}$.
\end{lemma}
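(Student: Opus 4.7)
The plan is to unfold the hypothesis $\hexp(\l x \rr) = \hexp(\l y \rr)$ into an equation of exponentials in $H$ and then apply two transferred standard facts: the injectivity of $\exp|V \cap \h$ (from hypothesis (3)), and the uniform continuity of $m_U$ on $\{z \in \g : p(z) < 1\}^{\times 2}$ (from hypothesis (5)). Together these will force $x \sim y$ in $\g^*$, hence $\l x \rr = \l y \rr$.

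First I would pick representatives $x, y \in \h_f$ with $p(x), p(y) < 1$ in $\r^*$; this is possible because $\p(\l x \rr) = \st(p(x)) < 1$ forces $p(x)$ to be strictly less than $1$, and similarly for $y$. Next I would unwind the hypothesis to get $w \in \mu_\h$ with $\exp(x)\exp(-y) = \exp(w)$. The transferred version of the cited fact from \cite{GN} gives $\exp(x)\exp(-y) = \exp(m_U(x, -y))$, and by the construction of $*_\h$ via $\exp^{-1}$ on $W_\h \subseteq \exp_H(V)$, we have $m_U(x, -y) = x *_\h (-y) \in V^* \cap \h$. Since $V$ is an open neighborhood of $0$, automatically $w \in \mu_\h \subseteq V^* \cap \h$. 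Applying the transferred injectivity of $\exp|V \cap \h$ to both sides then yields $m_U(x, -y) = w \in \mu_\h$.

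The concluding step would use local-group associativity to write $x = m_U(m_U(x, -y), y) = m_U(w, y)$; since $w \sim 0$ and all three arguments $w, 0, y$ have $p$-value less than $1$, the transferred uniform continuity of $m_U$ on $\{z \in \g : p(z) < 1\}^{\times 2}$ gives $m_U(w, y) \sim m_U(0, y) = y$. Thus $x \sim y$ in $\g^*$, and since $x, y \in \h$, we get $x - y \in \mu_\h$, i.e. $\l x \rr = \l y \rr$.

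The main obstacle I anticipate is ensuring that both sides of the key equality $\exp(m_U(x, -y)) = \exp(w)$ actually live in $V^* \cap \h$ so that the injectivity hypothesis applies. For $w$ this is automatic from $w \in \mu_\h$, but for $m_U(x, -y)$ one must trace through the definition of $*_\h$: the construction pushes the output back into $V \cap \h$ via $\exp^{-1}$, and the cited fact from \cite{GN} identifies this output with $m_U(x, -y)$. Hypothesis (5) is then essential at the very end, since without the uniform-continuity conclusion $m_U(w, y) \sim y$ there would be no route from $m_U(x, -y) \in \mu_\h$ to the infinitesimality of $x - y$ itself in every continuous seminorm of $\g$.
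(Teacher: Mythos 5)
Your proof is correct and follows essentially the same line as the paper's: unwind $\hexp(\l x\rr)=\hexp(\l y\rr)$ to an equation $\exp(m_U(x,-y))=\exp(w)$ with $w\in\mu_\h$, apply injectivity of $\exp$ on $V\cap\h$ to get $m_U(x,-y)\in\mu_\h$, then use local-group associativity together with uniform (more precisely, $S$-) continuity of $m_U$ to conclude $x\sim y$. The paper phrases the argument as a proof by contradiction and is less explicit about why both sides lie in $V^*\cap\h$ before applying injectivity, but the content is identical.
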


\begin{proof}
Suppose that there exist $\l x_1\rr,\l x_2\rr \in \hat{W}$ such that $\l x_1\rr \not= \l x_2\rr$ but $\hexp(\l x_1\rr)=\hexp(\l x_2\rr)$, i.e. $(\exp x_1)(\exp x_2)^{-1}\in \mu_H$.  Then $\exp(x_1*(-x_2))\in \mu_H$, i.e. there is $z\in \mu_\h$ such that $\exp(x_1*(-x_2))=\exp z$.  Since $\exp$ is injective on $V$, we have $x_1*(-x_2)=z$.  But then, by uniform continuity of $m_U$, $x_2\sim (x_1*(-x_2))*x_2=x_1$, a contradiction.
\end{proof}

\

\begin{rmk}\label{R:S-cont}
All that was used in the above construction of  $\hH$ was that $m_U$ was $S$-continuous on pairs of points of $\g$ with $p$-norm less than $1$ which were \textit{finite}, so the assumption that $m_U$ is uniformly differentiable at finite points allows those arguments to go through.
\end{rmk}

\

\noindent In addition to the conditions (1)-(5) we have imposed on the locally exponential Lie algebra $\g$ and the family of closed subalgebras $\mathcal{H}$, we further assume the condition

\

\begin{itemize}
\item[(6)] $m_U$ is uniformly smooth at finite points. 
\end{itemize}

\

\noindent Note that by uniform smoothness at finite points, Remark \ref{R:S-cont}, and Lemma \ref{L:uniffin}, condition (6) allows us to replace condition (5) with the condition 

\

\begin{itemize}
\item[(5$'$)] $m_U$ is a finite function.
\end{itemize}

\

\noindent Under these conditions, we get a smooth map $\hat{m_U}:\hat{D}\to \hat{\g^*}$.  

\

\begin{lemma}
$\hh$ is a locally exponential Lie algebra.
\end{lemma}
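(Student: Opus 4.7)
The plan is to exhibit the local Lie group witnessing local exponentiality of $\hh$ as a restriction of the smooth hull map $\hat{m_U}$ supplied in the paragraph immediately preceding the statement. By hypothesis (6), $m_U$ is uniformly smooth at finite points, so Proposition \ref{L:hullsmooth} gives a smooth map $\hat{m_U}\colon \hat{D}\to \hat{\g^*}$ satisfying $\hat{m_U}(\l x\rr,\l y\rr)=\l m_U(x,y)\rr$. I would restrict this map to $\hh\times \hh$ and check that, on a suitable circular convex $0$-neighborhood of $\hh$, the restriction actually takes values in $\hh$ and makes $\hh$ into a locally exponential Lie algebra.

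For the witnessing open neighborhood I would take $\hat{U}_0:=\{\l x\rr\in\hh \ | \ \hat{p}(\l x\rr)<1/2\}$, which is circular, convex, and open, and let $\hat{D}_0\subseteq (\hat{U}_0\times \hat{U}_0)\cap \hat{D}$ be a symmetric open neighborhood of $(0,0)$ (so that $(x,y)\in \hat{D}_0\Leftrightarrow (-y,-x)\in \hat{D}_0$). The essential step is that for $\l x\rr,\l y\rr\in \hat{U}_0$ we may choose representatives $x,y\in \h_f$ with $p(x),p(y)<1$; condition (4) together with the Fact preceding the definition of $H_f$ then gives $m_U(x,y)=x*_\h y\in \h$, so $\hat{m}(\l x\rr,\l y\rr):=\hat{m_U}(\l x\rr,\l y\rr)=\l m_U(x,y)\rr$ genuinely lies in $\hh$. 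Smoothness of $\hat{m}\colon \hat{D}_0\to \hat{U}_0$ is inherited from smoothness of $\hat{m_U}$, and the fact that $\hat{m}(\hat{D}_0)\subseteq \hat{U}_0$ can be arranged by shrinking $\hat{D}_0$ using continuity of $\hat{m}$ and the value $\hat{m}(0,0)=0$.

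It then remains to check the local group axioms for $(\hat{U}_0,\hat{D}_0,\hat{m},0)$ together with (E1) and (E2). The local group axioms (associativity, identity, smoothness of the inversion $\l x\rr\mapsto \l -x\rr$, and the symmetry condition) are straightforward transfers from the corresponding axioms for $(U,D,m_U,0)$, applied to representatives and then passed through $\pi_\h$; for example, associativity follows from $m_U(m_U(x,y),z)=m_U(x,m_U(y,z))$ inside $U^*$ whenever both sides are defined. Axiom (E1) is equally direct: for real scalars $t,s$ with $|t|,|s|,|t+s|\leq 1$, (E1) for $\g$ yields $m_U(tx,sx)=(t+s)x$, hence
\[
\hat{m}(t\l x\rr, s\l x\rr) = \l m_U(tx,sx)\rr = \l (t+s)x\rr = (t+s)\l x\rr.
\]

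The step I expect to be the main obstacle is (E2), which I would handle by invoking the formula $d^2 \hat{m_U}(\l a\rr)(\l h_1\rr,\l h_2\rr)=\l d^2 m_U(a)(h_1,h_2)\rr$ from Proposition \ref{L:hullsmooth} at $a=(0,0)$. The second-order Taylor coefficient of $m_U$ at the origin is $[x,y]$ by (E2) for $\g$, while Section 2 defines the Lie bracket on $\hh$ by $[\l x\rr,\l y\rr]=\l[x,y]\rr$, so the second-order term in the Taylor expansion of $\hat{m}$ at $(0,0)$ is exactly $[\l x\rr,\l y\rr]$. The only delicate bookkeeping is keeping track of the product factor $\g\times \g$ in the Taylor expansion of $m_U$ versus derivatives separated in the two variables, but this is routine once Proposition \ref{L:hullsmooth} has done the analytic work.
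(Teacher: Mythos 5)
Your overall strategy is the same as the paper's: restrict $\hat{m_U}$ to a ball in $\hh$, get smoothness from Proposition \ref{L:hullsmooth} together with the fact that $\hh$ is a closed (complete) subalgebra of $\hat{\g^*}$, and verify (E1) and (E2) by passing the corresponding identities for representatives through $\pi_\h$, with (E2) coming from $d^2\hat{m_U}(\l 0\rr,\l 0\rr)(\l h\rr,\l h\rr)=\l[h_1,h_2]\rr$. Your explicit appeal to condition (4) and the Fact to see that $m_U(x,y)=x*_\h y$ lands in $\h$ is exactly the point the paper handles by working with $*_\h$ on representatives of $p$-norm less than $1$.

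The one place your write-up would go wrong is the choice of $\hat{D}_0$. You propose to take a symmetric open neighborhood of $(0,0)$ and then shrink it so that $\hat{m}(\hat{D}_0)\subseteq\hat{U}_0$. But the local-group axioms require $(\l x\rr,0)$ and $(0,\l x\rr)$ to lie in $\hat{D}_0$ for \emph{every} $\l x\rr\in\hat{U}_0$, and (E1) requires $(t\l x\rr,s\l x\rr)\in\hat{D}_0$ for all $|t|,|s|,|t+s|\leq 1$; a sufficiently small neighborhood of the origin satisfies neither, so ``shrinking by continuity'' is not an option here. The paper's choice $D':=m_{\hh}^{-1}(\hat{W})$ repairs this for free: it is open by continuity of $m_{\hh}$, it contains $(\l x\rr,0)$ and $(0,\l x\rr)$ because $m_{\hh}(\l x\rr,0)=\l x\rr\in\hat{W}$, and it contains $(t\l x\rr,s\l x\rr)$ because $m_{\hh}(t\l x\rr,s\l x\rr)=(t+s)\l x\rr\in\hat{W}$. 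With that substitution (and no need to pass to radius $1/2$) your argument coincides with the paper's.
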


\begin{proof}
Let $\hat{W}$ be as in Lemma \ref{L:inj}.  Let $m_{\hh}:\hat{W} \times \hat{W} \to \hh$ be defined by $m_{\hh}(\l x\rr,\l y\rr)=\l x*y\rr$.  Note that $m_{\hh}=\hat{m_U}|\hat{W} \times \hat{W}$.  Since $\hh$ is a complete subalgebra of $\hat{\g^*}$, it follows that $m_{\hh}$ is smooth.  Let $D':=m_{\hh}^{-1}(\hat{W})$.  We claim that $(\hat{W},D',m_{\hh}|D',\l 0\rr)$ witnesses that $\hh$ is a locally exponential Lie algebra.

\

\noindent It is clear that the above data gives a local group, and since multiplication and inversion are smooth (inversion is in fact linear), we have that $(\hat{W},D',m_{\hh}|D',\l 0\rr)$ is a local Lie group.  We now must verify conditions (E1) and (E2) of Definition \ref{D:locexp}.  Towards proving (E1), suppose $\l x\rr\in \hat{W}$ and $|t|,|s|,|t+s|\leq 1$.  We need $(t\l x\rr,s\l x\rr)\in D'$ and $m_{\hh}(t\l x\rr,s\l x\rr)=(t+s)\l x\rr$.  Now since $\l x\rr\in \hat{W}$, we know that $p(x)<1$, whence $(tx,sx)\in D_\h$ and $tx*sx=(t+s)x$.   It thus follows that $m_{\hh}(t\l x\rr,s\l x\rr)=(t+s)\l x\rr\in \hat{W}$ and so $(t\l x\rr,s\l x\rr)\in D'$.  

\

\noindent Now suppose $h=(h_1,h_2)\in \g\times \g$.  Since $d^2m_U(0,0)(h,h)=[h_1,h_2]$, Proposition \ref{L:hullsmooth} implies that $$d^2\hat{m_U}(\l 0\rr,\l 0\rr)(\l h\rr,\l h\rr)=\l [h_1,h_2]\rr=[\l h_1\rr,\l h_2\rr].$$  It thus follows that the Lie algebra of the local group $(\hat{W},D',m_{\hh}|D',\l 0\rr)$ is $\hh$.  Hence (E2) holds and the proof is finished. 
\end{proof}

\

\begin{lemma}
$\hh$ is enlargeable.
\end{lemma}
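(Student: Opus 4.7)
The plan is to promote $\hH=H_f/\mu_H$ to a locally exponential Lie group with Lie algebra $\hh$, taking $\hexp$ as its smooth exponential map; by definition this shows $\hh$ is enlargeable. The candidate group is already constructed, and the preceding lemma has established that the germ of its product structure, namely $(\hat{W},D',m_{\hh}|D',\l 0\rr)$, is a local Lie group with Lie algebra $\hh$.

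First I would equip $\hH$ with a smooth manifold structure modeled on $\hh$. By Lemma \ref{L:inj} the restriction $\hexp|\hat{W}$ is injective, so I declare its image to be an open neighborhood of the identity in $\hH$ with chart $(\hexp|\hat{W})^{-1}$; since $H_f=\bigcup_n\exp(\h_f)^n$, the set $\hexp(\hat{W})$ generates $\hH$, and I can transport this chart to each $g\in\hH$ by left translation $L_g$. Compatibility of two translated charts at $g_1,g_2$ reduces, by left-invariance, to the smoothness of left multiplication by $g_2^{-1}g_1$ in the $\hexp$-chart near $\l 0\rr$. Writing $g_2^{-1}g_1=\hexp(\l y_1\rr)\cdots\hexp(\l y_n\rr)$ with $y_i\in\h_f$, this map factors as a composition of maps $\l x\rr\mapsto m_{\hh}(\l y_i\rr,\l x\rr)$ on shrinking subdomains, and each factor is smooth because $m_{\hh}=\hat{m_U}|\hat{W}\times\hat{W}$ is smooth by the preceding lemma (ultimately by Proposition \ref{L:hullsmooth} applied to hypothesis (6)). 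The same factorization, together with the linearity of inversion in $\hexp$-coordinates near $e$, yields smoothness of the global group operations on $\hH$; Hausdorffness of the topology follows from Lemma \ref{L:inj} together with the fact that $\mu_H$ is precisely the kernel of $\pi_H$ at the identity.

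Next I would verify the locally exponential axioms for $\hH$. Property (E1) for $m_{\hh}$ ensures that each curve $t\mapsto\hexp(t\l x\rr)$ is a continuous one-parameter subgroup of $\hH$ with velocity $\l x\rr$ at $t=0$, and $\hexp$ is globally smooth via the identity $\hexp(\l x\rr)=\hexp(\l x\rr/n)^n$ for $n$ large enough that $\l x\rr/n\in\hat{W}$, the right-hand side being a smooth composition. Since $\hexp|\hat{W}$ is a diffeomorphism onto an open $e$-neighborhood by construction, $\hH$ is locally exponential, and the identification $L(\hH)\cong\hh$ follows from condition (E2) already verified for $(\hat{W},D',m_{\hh}|D',\l 0\rr)$ in the previous lemma.

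The main obstacle is the compatibility of the translated charts and the global smoothness of multiplication on $\hH$. Where Pestov can invoke BCH globally in the Banach case, in our locally convex setting we must reduce every non-infinitesimal computation to finitely many applications of the smooth local product $m_{\hh}$, and for this reduction to succeed the local product must in fact be smooth on $\hat{W}\times\hat{W}$ rather than merely continuous; this is exactly what hypothesis (6) and the uniform-smoothness-at-finite-points framework developed earlier in the paper are designed to provide.
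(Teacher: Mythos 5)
Your plan is essentially the paper's argument: both rest on the injectivity of $\hexp|\hat{W}$ from Lemma \ref{L:inj}, the local Lie group structure $(\hat{W},D',m_{\hh}|D',\l 0\rr)$ from the preceding lemma, and the observation that $\hexp$ is a local group morphism into the group $\hH$. The only difference is that where you reconstruct the globalization step by hand (left-translated charts, chart compatibility, smoothness of the group operations), the paper simply invokes Corollary II.2.2 of Neeb's survey \cite{N}, which packages exactly that construction; your remark that $\hexp(\hat{W})$ generates all of $\hH$ is correct but not needed, since the paper just takes the subgroup $\hat{H_1}$ generated by $\hexp(\hat{W})$.
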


\begin{proof}
By Lemma \ref{L:inj}, $\hat{\exp}|\hat{W}$ is injective.  It is also clear from the definitions that $\hat{\exp}$ is a local group morphism when $\hh$ is endowed with the local group structure from the previous lemma.  Let $\hat{H_1}$ denote the subgroup of $\hat{H}$ generated by $\exp(\hat{W})$.  Then Corollary II.2.2 of \cite{N} implies that $\hat{H_1}$ carries the unique structure of a Lie group so that $\hat{\exp}|\hat{W}$ is a diffeomorphism onto an open subset of $\hat{H_1}$.  Then $\hat{H_1}$ is a locally exponential Lie group with Lie algebra $\hh$, finishing the proof.
\end{proof}

\

\noindent We are now ready to state our main theorem on localizing enlargeability.

\

\begin{thm}\label{T:enlarg}
Suppose $\g$ is a locally exponential Lie algebra and $\mathcal{H}$ is a family of closed subalgebras such that $\g$ and $\mathcal{H}$ satisfy conditions (1)-(6) (or (5') instead of (5)).  Then $\g$ is enlargeable. 
\end{thm}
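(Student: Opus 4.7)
The strategy mirrors Pestov's argument in the Banach--Lie setting: choose an internal subalgebra $\h\in\mathcal{H}^*$ so large that $\g$ embeds as a closed subalgebra of $\hh$, observe via the preceding lemmas that $\hh$ is enlargeable, and then invoke the general fact that a closed Lie subalgebra of an enlargeable locally exponential Lie algebra is enlargeable.

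First I would use the directedness of $\mathcal{H}$ together with saturation to select $\h\in\mathcal{H}^*$ containing every standard $\mathfrak{k}\in\mathcal{H}$: the family of internal sets $\{\h'\in\mathcal{H}^* : \mathfrak{k}\subseteq \h'\}$, indexed by $\mathfrak{k}\in\mathcal{H}$, has the finite intersection property by directedness, so such $\h$ exists. Conditions (2)--(6) transfer to $\h$: there is an internal connected locally exponential Lie group $H$ with $L(H)\cong \h$, the map $\exp_H|V^*\cap \h$ is injective, and the internal multiplication $m_U$ is uniformly smooth at finite points and a finite function on the relevant $p$-balls. All the preparatory lemmas of this section therefore apply and yield that $\hh$ is a locally exponential Lie algebra and that $\hh$ is enlargeable, with witnessing locally exponential Lie group $\hH$.

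Next I would check that the canonical embedding $\iota:\g\to\hat{\g^*}$ of Remark 2.3 takes values in $\hh$ and identifies $\g$ with a closed Lie subalgebra of $\hh$. For each $\mathfrak{k}\in\mathcal{H}$, the choice of $\h$ yields $\iota(\mathfrak{k})\subseteq \iota(\h)\subseteq \hh$, hence $\iota(\bigcup\mathcal{H})\subseteq \hh$; density of $\bigcup\mathcal{H}$ in $\g$ (condition (1)), continuity of $\iota$, and closedness of $\hh$ in $\hat{\g^*}$ (Remark 2.4) together force $\iota(\g)\subseteq \hh$. Since $\iota$ preserves every seminorm in $\Gamma_\g$ and $\g$ is complete (as is built into the notion of a locally exponential Lie algebra in Neeb's sense), $\iota(\g)$ is closed in $\hh$, and it is a Lie subalgebra since $\iota$ is a Lie homomorphism.

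Finally I would invoke the locally exponential analogue of the classical Banach fact that a closed subalgebra of an enlargeable Lie algebra is enlargeable (available from the integrability results in \cite{N}) to conclude that $\g$ itself is enlargeable. The main obstacle is this last step: whereas in the Banach--Lie case the closed subalgebra theorem rests on convergence of the BCH series, in the locally exponential category the corresponding result is substantially more delicate and must be imported from the infinite-dimensional Lie theory literature; the rest of the argument is a routine application of saturation together with the preparatory lemmas developed in this section.
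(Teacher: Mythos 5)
Your proposal is correct and follows essentially the same route as the paper: a saturation argument produces $\h\in\mathcal{H}^*$ with $\iota(\g)\subseteq\hh$, the preceding lemmas give enlargeability of $\hh$, and the conclusion is drawn from Neeb's integrability result (the paper cites Corollary IV.4.10 of \cite{N}, which needs only that $\iota$ is a continuous injection, so your extra verification that $\iota(\g)$ is closed in $\hh$ is not required). The only minor difference is in the saturation step: the paper finds a hyperfinite $A\subseteq(\bigcup\mathcal{H})^*$ meeting $\mu(g)$ for every $g\in\g$ and then a single $\h\in\mathcal{H}^*$ containing $A$ by directedness, whereas you arrange $\mathfrak{k}^*\subseteq\h$ for every standard $\mathfrak{k}\in\mathcal{H}$ and then appeal to density of $\bigcup\mathcal{H}$ together with the closedness of $\hh$ in $\hat{\g^*}$.
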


\begin{proof}
As in \cite{P}, we get an internal $\h\in \mathcal{H}^*$ such that the map $\iota:\g \to \hat{\g^*}$ actually takes values in $\hh$.  For the sake of completeness, let us repeat how this argument goes.  Let $X:=\bigcup \mathcal{H}$.  Consider the following family of internal conditions on $A\in \mathcal{P}_F(X)^*$, the set of hyperfinite subsets of $X^*$:

$$C(g,p,n):=\{A\in \mathcal{P}_F(X)^* \ | \ \text{ there exists } g'\in A \text{ such that } p(g-g')<\frac{1}{n}\},$$ where $g$ ranges over $\g$ and $p$ ranges over a set of continuous seminorms of $\g$ generating the topology on $\g$.  Assumption (1) implies that for each $g\in \g$ we have $\mu(g)\cap X^*\not= \emptyset$, whence the family of internal sets $C(g,p,n)$ has the finite intersection property.  Hence, by saturation, there is an $A\in \mathcal{P}_F(X)^*$ in each $C(g,p,n)$, i.e. $\mu(g)\cap A\not=\emptyset$ for every $g\in \g$.  Since the family $\mathcal{H}$ is directed, there is $\h \in \mathcal{H}^*$ such that $A\subseteq \h$.  This is the desired $\h$.

\

\noindent Since $\iota: \g \to \hh$ is a continuous injection, we can infer that $\g$ is enlargeable from the enlargeability of $\hh$ using \cite{N}, Corollary IV.4.10.
\end{proof}

\

\noindent Let us mention a corollary of this theorem involving only standard notions.  Say that a locally exponential Lie algebra $\g$ is \textbf{strong} if there is a local group $(U,D,m_U,0)$ witnessing that $\g$ is a locally exponential Lie algebra for which $m_U$ is strongly smooth.

\

\begin{cor}\label{T:strong}
If $\g$ is a strong locally exponential Lie algebra with a family $\mathcal{H}$ of closed subalgebra satisfying conditions (1)-(4), then $\g$ is enlargeable.
\end{cor}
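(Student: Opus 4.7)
The approach is to reduce the corollary to Theorem \ref{T:enlarg} by checking that the strong-smoothness hypothesis automatically supplies the two conditions (5) and (6) that distinguish the hypotheses of the corollary from those of the main theorem. Once (1)--(6) are all established, the conclusion follows directly.

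Condition (6) comes for free: since $m_U$ is strongly $C^k$ for each $k$, Lemma \ref{L:strongunifk} gives that $m_U$ is uniformly $C^k$ at finite points for each $k$, i.e., $m_U$ is uniformly smooth at finite points.

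The bulk of the work lies in establishing condition (5), and I would handle it in two steps. First, Lemma \ref{L:strongunif} converts strong $C^1$-ness into uniform continuity of $m_U$ on all of $D$, so the ``uniform continuity'' half of (5) will automatically hold for any subset of $D$. The remaining half requires producing a single continuous seminorm $p$ on $\g$ witnessing both (4) and the containment $\{x \in \g \mid p(x) < 1\}^{\times 2} \subseteq D$. Starting from the seminorm $p_0$ supplied by (4) and using openness of $D$ around the origin to choose another continuous seminorm $q$ on $\g$ with $\{x \in \g \mid q(x) < 1\}^{\times 2} \subseteq D$, the natural candidate is $p := \max(p_0, q)$: the $p$-ball of radius $1$ sits inside both the $p_0$-ball and the $q$-ball of radius $1$, so (4) is inherited (its left-hand side only shrinks while $D_\h$ is fixed) and the new containment demanded by (5) is realized. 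Uniform continuity of $m_U$ on the $p$-ball of radius $1$ squared then comes by restriction.

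There is no substantive obstacle here --- the entire argument is an application of Theorem \ref{T:enlarg} once Lemmas \ref{L:strongunif} and \ref{L:strongunifk} are invoked to supply (6) and half of (5); the only bookkeeping point is the seminorm enlargement $p_0 \mapsto \max(p_0,q)$, and checking that it does not invalidate the previously chosen data $W_\h$ from (4), which is automatic since the new $p$-ball lies inside the old $p_0$-ball.
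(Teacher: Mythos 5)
Your proof is correct and follows essentially the paper's route: reduce to Theorem \ref{T:enlarg}, with Lemma \ref{L:strongunifk} supplying condition (6) and Lemma \ref{L:strongunif} supplying uniform continuity of $m_U$. The only difference is that the paper then invokes Lemma \ref{L:uniffin} to verify the substitute condition (5$'$) (that $m_U$ is a finite function, which the theorem accepts in place of (5) once (6) holds), whereas you verify (5) literally by enlarging the seminorm from (4) to $\max(p_0,q)$; both routes are legitimate and your bookkeeping for the seminorm is sound.
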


\begin{proof}
This follows from the previous theorem, using Lemmas \ref{L:strongunif}, \ref{L:strongunifk}, and \ref{L:uniffin}.
\end{proof}

\

\noindent We can remove some of the assumptions on $\g$ in Theorem \ref{T:enlarg} if $\g$ is a complete (HM)-space.

\

\begin{cor}\label{T:HM}
If $\g$ is a locally exponential Lie algebra modeled on a complete (HM)-space with a family $\mathcal{H}$ of closed subalgebras satisfying conditions (1)-(4), then $\g$ is enlargeable.
\end{cor}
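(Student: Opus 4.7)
The plan is to reduce to Theorem~\ref{T:enlarg} by verifying conditions (5$'$) and (6) for some local group data witnessing that $\g$ is locally exponential. Since $\g \times \g$ is itself a complete (HM)-space --- nearstandardness and finiteness in a finite product are coordinatewise --- and the multiplication $m_U : D \to \g$ is smooth by Definition~\ref{D:locexp}, Lemma~\ref{L:HMsmooth} immediately gives that $m_U$ is uniformly smooth at finite points, i.e.\ (6) holds.

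For (5$'$), the main obstacle is the following: given $(x,y) \in D^* \cap (\g \times \g)_f$, the complete (HM) hypothesis guarantees a standard part $(a,b)$, but $(a,b)$ might lie in $\partial D$, so S-continuity of $m_U$ does not yet control $m_U(x,y)$. To remove this obstruction I would shrink the local group data so that the new domain has its closure inside the old $D$. Concretely, pick a continuous seminorm $q$ on $\g$ and some $r \in \r^{>0}$ with $\{(u,v) \in \g\times\g \ | \ q(u), q(v) \leq r\} \subseteq D$ and $\{u \in \g \ | \ q(u) < r\} \subseteq U$; set $U' := \{u \in \g \ | \ q(u) < r\}$ (which is circular and convex) and $D' := U' \times U'$. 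Shrinking $U'$ a further finite number of times if necessary --- using continuity of $m_U$ at $(0,0)$ and the fact that $\{u : q(u) < s\}$ is circular convex for every $s > 0$ --- arrange that $m_U(D') \subseteq U'$ and that the scaling axiom (E1) of Definition~\ref{D:locexp} holds on $U'$. Then $\overline{D'} \subseteq D$, and $(U', D', m_U|_{D'}, 0)$ is again a valid witness to $\g$ being locally exponential. Now for $(x,y) \in (D')^* \cap (\g \times \g)_f$, the standard part $(a,b)$ lies in $\overline{D'} \subseteq D$, so in particular $(x,y) \in \int(D^*)$; Lemma~\ref{L:diffcont} (applied to the instance of (6) already established) yields $m_U(x,y) \sim m_U(a,b) \in \g_{\ns} \subseteq \g_f$. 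Thus $m_U|_{D'}$ is a finite function, i.e.\ (5$'$) holds.

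Conditions (1)--(4) concern only $\g$ and $\mathcal{H}$ (the seminorm $p$ in (4) is intrinsic to the subalgebras), so they persist under this shrinking. Theorem~\ref{T:enlarg} applied to the shrunken data and the family $\mathcal{H}$ then yields enlargeability of $\g$. The principal technical care lies in checking that the shrinking really preserves every axiom of a locally exponential Lie algebra; this is routine but requires nested applications of continuity of $m_U$ at the origin.
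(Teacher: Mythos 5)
Your overall route is the paper's: verify (6) via Lemma \ref{L:HMsmooth} (noting that $\g\times\g$ is again a complete (HM)-space) and verify (5$'$) from continuity of $m_U$ together with the fact that in a complete (HM)-space finite points are nearstandard, then invoke Theorem \ref{T:enlarg}. The paper's proof consists of exactly these two observations and nothing more. The boundary subtlety you raise is real if one reads ``finite function'' literally as a condition on all of $D^*\cap(\g\times\g)_f$; but it is also avoidable without any shrinking, because finiteness of $m_U$ is only ever used to evaluate $m_U$ at representatives of points of $\hat{D}$, and by construction such representatives lie in $\int(D^*)\cap(\g\times\g)_f$, where the standard part does belong to $D$ and ordinary continuity already gives nearstandardness, hence finiteness, of the value.

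The one step of your argument that does not go through as written is ``shrinking $U'$ a further finite number of times \dots arrange that $m_U(D')\subseteq U'$.'' In a general locally convex space there is no reason a seminorm ball $W=\{u \mid q(u)<r\}$ with $m_U(W\times W)\subseteq W$ should exist: each application of continuity of $m_U$ at $(0,0)$ returns a possibly strictly finer seminorm, and the iteration need not stabilize after finitely many steps (this is essentially the difference between the normable and the general locally convex setting). The repair is to not insist that $D'$ be a product: take $D':=(U'\times U')\cap m_U^{-1}(U')$, which is open, satisfies $m_U(D')\subseteq U'$ by definition, still has closure inside $D$, and still satisfies (E1) and the local group axioms, since circularity of $U'$ gives $tx,sx\in U'$ and $tx*sx=(t+s)x\in U'$, whence $(tx,sx)\in D'$. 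A second point to flag: condition (4) is not purely ``intrinsic to the subalgebras'' --- the set $D_\h$ is defined via a neighborhood $W_\h\subseteq\exp_H(V)$ with $V\subseteq U$ and $V\times V\subseteq D$, so shrinking $U$ and $D$ forces $V$, hence $W_\h$ and $D_\h$, to shrink, and one must then rescale $p$ uniformly in $\h\in\mathcal{H}$ to recover (4); this needs an argument. Both issues disappear if you drop the shrinking altogether and argue as in the previous paragraph.
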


\begin{proof}
It is clear from the continuity of $m_U$ and the fact that the finite points are all nearstandard that $m_U$ is a finite map.  By Lemma \ref{L:HMsmooth}, we know that $m_U$ is uniformly smooth at finite points.
\end{proof}

\

\begin{question}  Corollaries \ref{T:strong} and \ref{T:HM} would be exact analogs of Pestov's Theorem for certain classes of locally exponential Lie algebras if condition (4) were not needed. Can one get rid of assumption (4) in any of the above results?
\end{question}

\

\noindent Pestov draws the following corollary to his theorem.

\

\begin{cor}\label{C:locfd}(Pestov)
If $\g$ is a Banach-Lie algebra which contains a dense subalgebra in which every finitely generated subalgebra is finite-dimensional (or such that every finitely generated subalgebra is solvable), then $\g$ is enlargeable.
\end{cor}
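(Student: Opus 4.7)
The plan is to apply Pestov's Theorem \ref{T:pestov} (equivalently, Corollary \ref{T:strong} of this paper, since a Banach-Lie algebra is strong: its BCH multiplication is real-analytic on any sufficiently small norm-ball, hence strongly smooth). Let $\g_0$ denote the given dense subalgebra of $\g$, and take $\mathcal{H}$ to be the family of closures (in $\g$) of all finitely generated Lie subalgebras $\mathfrak{a} \subseteq \g_0$. This family is directed: given $\h_i = \overline{\mathfrak{a}_i}$ arising from finite generating sets $S_i \subseteq \g_0$, the closure of the subalgebra generated by $S_1 \cup S_2$ is a common upper bound. Density of $\bigcup \mathcal{H}$ in $\g$ is immediate from density of $\g_0$.

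Next I verify that each $\h = \overline{\mathfrak{a}} \in \mathcal{H}$ is an enlargeable Banach-Lie algebra. In the first case of the hypothesis, $\mathfrak{a}$ is finite-dimensional, hence already closed, so $\h = \mathfrak{a}$ is finite-dimensional and therefore enlargeable by Lie's third theorem (the corresponding simply connected Lie group has injective $\exp$ in a uniform ball around $0$ coming from the BCH radius of convergence). In the second case, $\mathfrak{a}$ is solvable; continuity of the bracket combined with the inclusion $\h^{(n)} \subseteq \overline{\mathfrak{a}^{(n)}}$ shows that $\h$ is likewise solvable. Closed solvable Banach-Lie algebras are known to be enlargeable (this is a classical result, and is the main nontrivial input), which handles this case.

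For the remaining conditions, I exploit the uniformity furnished by the BCH series in the Banach setting. One picks a radius $r > 0$, depending only on the norm of $\g$, such that the BCH series converges absolutely on pairs of elements of norm less than $r$ in any closed Banach-Lie subalgebra, and such that for every connected Banach-Lie group $H$ with Lie algebra $\h \in \mathcal{H}$, the map $\exp_H$ is a diffeomorphism from the open $r$-ball in $\h$ onto an open neighborhood $W_\h$ of $e$ in $H$. Setting $p := r^{-1}\|\cdot\|$ and $V := \{x \in \g \mid p(x) < 1\}$ then yields conditions (3) and (4) uniformly across $\mathcal{H}$, and condition (5) (with the replacement by (5$'$) if one prefers) holds because $m_U$ is locally Lipschitz on bounded sets in the Banach setting.

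The main obstacle is the enlargeability of the pieces $\h$ in the solvable case, which is the deep classical ingredient; everything else is routine bookkeeping of the BCH estimates on Banach spaces to secure the uniform $V$ and $p$ required by Theorem \ref{T:pestov}. Once all hypotheses are verified, enlargeability of $\g$ follows immediately.
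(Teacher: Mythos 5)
The paper states this corollary without proof (it is quoted as Pestov's corollary to Theorem \ref{T:pestov}), so there is no internal argument to compare against; judging your proposal on its own terms, the skeleton is the right one: take $\mathcal{H}$ to be the closures of the finitely generated subalgebras of the dense subalgebra, check directedness and density, observe that each member is finite-dimensional (hence closed and enlargeable by Lie's third theorem) or solvable (since the derived series of $\overline{\mathfrak{a}}$ sits inside the closures of the $\mathfrak{a}^{(n)}$, and solvable Banach--Lie algebras are enlargeable), and feed this into Theorem \ref{T:pestov}. Since $\g$ is Banach, you should apply Theorem \ref{T:pestov} directly; the detour through Corollary \ref{T:strong} and conditions (3)--(5) of Section 5 is unnecessary and drags in condition (4), which you do not actually verify.

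The genuine gap is in the injectivity hypothesis, which is the one nontrivial bullet of Theorem \ref{T:pestov} and the reason the corollary needs finite-dimensionality or solvability at all. First, your third paragraph asserts a uniform $r$ such that $\exp_H$ is a diffeomorphism of the $r$-ball onto its image for \emph{every connected} Banach--Lie group $H$ with $L(H)\in\mathcal{H}$; this is false (take $\h=\r$ and $H=\r/\epsilon\Z$ with $\epsilon$ small). Theorem \ref{T:pestov} only requires injectivity of $\exp_H|V\cap\h$ for the \emph{simply connected} $H$, and you must restrict to that case. Second, even for simply connected $H$, uniform injectivity on a ball whose radius depends only on the norm of the bracket is \emph{not} a consequence of the ``BCH radius of convergence'': convergence of the BCH series gives a local group on a uniform ball, but does not by itself rule out $\exp(z)=e$ for small nonzero $z$ in $H$, and indeed uniform injectivity fails for general simply connected Banach--Lie groups (this failure is exactly why it appears as a hypothesis in Theorem \ref{T:pestov} and why non-enlargeable Banach--Lie algebras exist). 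The correct input is the Lazard--Tits theorem \cite{LT} on domains of injectivity of $\exp$ for simply connected Lie groups with finite-dimensional, respectively solvable, Lie algebra; you invoke nothing of the sort in the finite-dimensional case and do not address injectivity at all in the solvable case, where you only discuss enlargeability. Supplying the Lazard--Tits bound (after normalizing the norm so that $\|[x,y]\|\leq\|x\|\,\|y\|$, which fixes $V$ uniformly across $\mathcal{H}$) closes the argument.
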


\

\begin{question}
Even if one were able to obtain perfect analogs of Pestov's theorem for arbitrary locally exponential Lie algebras, would one be able to draw similar conclusions as in Corollary \ref{C:locfd}?
\end{question}

\

\end{document}